\begin{document}
\title[TV of the Normal as Shape Prior]{Total Variation of the Normal Vector Field as Shape Prior}
\date{August 22, 2019}
\author[R.~Bergmann]{Ronny Bergmann}
\address{Technische Universität Chemnitz, Faculty of Mathematics, 09107 Chemnitz, Germany}
\email{ronny.bergmann@mathematik.tu-chemnitz.de}
\urladdr{https://www.tu-chemnitz.de/mathematik/part\_dgl/people/bergmann}

\author[M.~Herrmann]{Marc Herrmann}
\address{Julius-Maximilians-Universität Würzburg, Faculty of Mathematics and Computer Science, Lehrstuhl für Mathematik~VI, Emil-Fischer-Straße~40, 97074 Würzburg, Germany}
\email{marc.herrmann@mathematik.uni-wuerzburg.de}
\urladdr{https://www.mathematik.uni-wuerzburg.de/~herrmann}

\author[R.~Herzog]{Roland Herzog}
\address{Technische Universität Chemnitz, Faculty of Mathematics, 09107 Chemnitz, Germany}
\email{roland.herzog@mathematik.tu-chemnitz.de}
\urladdr{https://www.tu-chemnitz.de/herzog}

\author[S.~Schmidt]{Stephan Schmidt}
\address{Julius-Maximilians-Universität Würzburg, Faculty of Mathematics and Computer Science, Lehrstuhl für Mathematik~VI, Emil-Fischer-Straße~40, 97074 Würzburg, Germany}
\email{stephan.schmidt@mathematik.uni-wuerzburg.de}
\urladdr{https://www.mathematik.uni-wuerzburg.de/~schmidt}

\author[J.~Vidal-N{\'u}{\~n}ez]{Jos{\'e} Vidal-N{\'u}{\~n}ez}
\address{Technische Universität Chemnitz, Faculty of Mathematics, 09107 Chemnitz, Germany}
\email{jose.vidal-nunez@mathematik.tu-chemnitz.de}
\urladdr{https://www.tu-chemnitz.de/mathematik/part\_dgl/people/vidal}

\begin{abstract}
	An analogue of the total variation prior for the normal vector field along the boundary of smooth shapes in 3D is introduced.
	The analysis of the total variation of the normal vector field is based on a differential geometric setting in which the unit normal vector is viewed as an element of the two-dimensional sphere manifold. 
	It is shown that spheres are stationary points when the total variation of the normal is minimized under an area constraint.
	Shape calculus is used to characterize the relevant derivatives.
	Since the total variation functional is non-differentiable whenever the boundary contains flat regions, an extension of the split Bregman method to manifold valued functions is proposed.
\end{abstract}

\keywords{total variation of the normal; differential geometry; split Bregman iteration; shape optimization}

\maketitle

\section{Introduction}
\label{sec:Introduction}
The total variation (TV) functional is popular as a regularizer in imaging and inverse problems; see for instance \cite{RudinOsherFatemi1992,ChanGolubMulet1999,BachmayrBurger2009,Langer2017} and \cite[Chapter~8]{Vogel2002}.
For a real-valued function $u \in W^{1,1}(\Omega)$ on a bounded domain $\Omega$ in $\R^2$, the total variation seminorm is defined as
\begin{equation}
	\label{eq:TV_for_function}
	\abs{u}_{TV(\Omega)}
	\coloneqq
	\int_\Omega \abs{\nabla u}_2 \, \dx
	= 
	\int_\Omega \bigh(){\abs{(Du) \, \be_1}^2 + \abs{(Du) \, \be_2}^2}
	.
\end{equation}
Notice that we restrict the discussion to the isotropic case here, i.e., $\abs{\,\cdot\,}_2$ denotes the Euclidean norm.
Moreover, $Du$ is the derivative of $u$ and $\{\be_1, \be_2\}$ denotes the standard Euclidean basis.
The seminorm \eqref{eq:TV_for_function} extends to less regular, so-called BV functions (bounded variation), whose distributional gradient exists only in the sense of measures.
We refer the reader to \cite{Giusti1984,AttouchButtazzoMichaille2006} for an extensive discussion of BV functions.
The utility of \eqref{eq:TV_for_function} as a regularizer, or prior, lies in the fact that it favors piecewise constant solutions. 

In this paper, we introduce a novel regularizer based on the total variation, which can be used, for instance, in shape optimization applications as well as geometric inverse problems.
In the latter class, the unknown, which one seeks to recover, is a \emph{shape} $\Omega \subset \R^3$, which might represent the location of a source or inclusion inside a given, larger domain, or the geometry of an inclusion or a scatterer. 
The boundary of $\Omega$ will be denoted by $\Gamma$.

The novel functional, which we term the \emph{total variation of the normal field} along a smooth surface $\Gamma$, is defined by 
\begin{equation}
	\label{eq:TV_of_normal}
	\abs{\bn}_{TV(\Gamma)}
	\coloneqq
	\int_\Gamma \bigh(){\absRiemannian{(D_\Gamma \bn) \, \bxi_1}^2 + \absRiemannian{(D_\Gamma \bn) \, \bxi_2}^2}^{1/2} \, \ds
\end{equation}
in analogy to \eqref{eq:TV_for_function}.
In \eqref{eq:TV_of_normal}, $\bn$ is the outer unit normal vector field along
$\Gamma$, i.e., $\bn$ belongs to the manifold $\sphere{2} = \{ \bv \in \R^3:
\abs{\bv}_2 = 1 \}$ pointwise. 
Moreover, $D_\Gamma \bn$ denotes the derivative (push-forward) of the normal vector field, and $\{\bxi_1(\bs),\bxi_2(\bs)\}$ denotes an orthonormal basis (w.r.t.\ the Euclidean inner product in the embedding $\Gamma \subset \R^3$) of the tangent spaces $\tangent{\bs}{\Gamma}$ along $\Gamma$. 
Finally, $\absRiemannian{\,\cdot\,}$ denotes the norm induced by a Riemannian metric on $\sphere{2}$. 
We will consider the metric induced from embedding $\sphere{2}$ in $\R^3$, i.\,e.,~the distance induced by this metric is the arc length distance and the curvature is 1. 
We write $\absRiemannian{\,\cdot\,}$ for the norm induced by the Riemannian metric $\Riemannian{\cdot}{\cdot}$ on the tangent spaces $\sphere{2}$.

Let us argue that \eqref{eq:TV_of_normal} generalizes \eqref{eq:TV_for_function}.
Since the normal vector field $\bn$ replaces the scalar-valued function $u$ in \eqref{eq:TV_for_function}, assume for the moment that $\bn$ maps into $\R$ instead of $\sphere{2}$.
Then the tangent space $\tangent{\bn}{\R}$ is equal to $\R$, endowed with its usual inner product.
Finally, the manifold $\Gamma$ in \eqref{eq:TV_of_normal} takes the role of $\Omega \subset \R^2$ in \eqref{eq:TV_for_function}.
We can choose, without loss of generality, the basis $\bxi_i = \be_i$.
Consequently, \eqref{eq:TV_of_normal} becomes \eqref{eq:TV_for_function}:
\begin{equation*}
	\int_\Gamma \bigh(){\absRiemannian{(D_\Gamma \bn) \, \bxi_1}^2 + \absRiemannian{(D_\Gamma \bn) \, \bxi_2}^2}^{1/2} \, \ds
	=
	\int_\Omega \bigh(){\textstyle \bigabs{\frac{\partial u}{\partial x_1}}^2 + \bigabs{\frac{\partial u}{\partial x_2}}^2}^{1/2} \, \dx 
	=
	\displaystyle
	\int_\Omega \abs{\nabla u}_2 \, \dx
	.
\end{equation*}

A thorough introduction to $\abs{\bn}_{TV(\Gamma)}$ and its properties will be given in \Cref{sec:analysis_of_tv_of_normal}.
Nevertheless we wish to point out already at this point a number of properties of \eqref{eq:TV_of_normal} which set it apart from \eqref{eq:TV_for_function}:
\begin{enumerate}
	\item 
		The variable on which \eqref{eq:TV_of_normal} depends is the domain $\Omega$.
		Since the normal vector field $\bn$ in turn depends on $\Omega$, both the integration domain $\Gamma$ and the integrand in \eqref{eq:TV_of_normal} depend on $\Omega$.
		By contrast, $\Omega$ is fixed in \eqref{eq:TV_for_function}, where $u$ is the variable.

	\item 
		The normal vector field, whose pointwise variation the total variation functional \eqref{eq:TV_of_normal} seeks to capture, is manifold-valued with values in $\sphere{2}$.
		By contrast, the function $u$ in \eqref{eq:TV_for_function} is real-valued.

	\item 
		It is well known that the TV functional penalizes jumps and non-zero gradients of BV functions.
		Consequently, the minimization of \eqref{eq:TV_for_function} avoids unnecessary variations of $u$ and thus favors piecewise constant minimizers in BV. Generally, it  does not admit minimizers in spaces of functions of higher smoothness, such as $W^{1,1}(\Omega)$.
		The situation is slightly different for \eqref{eq:TV_of_normal} since we are considering closed surfaces $\Gamma$, which yields a periodicity constraint for the normal vector field $\bn$.
		In this setting, unnecessary variations of $\bn$ correspond to non-convex regions of the enclosed body $\Omega$.
		Consequently, the minimization of \eqref{eq:TV_of_normal} favors convex shapes and, more precisely, spheres; see \Cref{theorem:sphere_is_stationary}.
\end{enumerate}
Further properties of \eqref{eq:TV_of_normal} will be discussed in \Cref{sec:analysis_of_tv_of_normal}.

In this paper we are also considering the total variation of the normal \eqref{eq:TV_of_normal} as a prior in shape optimization problems, which may involve a partial differential equation (PDE).
The aforementioned problem can be cast in the form
\begin{equation}
	\label{eq:geometric_inverse_problem_with_TV_of_normal}
	\begin{aligned}
		& \text{Minimize} \quad \ell(u(\Omega),\Omega) + \beta \, \abs{\bn}_{TV(\Gamma)} \\
		& \text{w.r.t.\ $\Omega$ in a suitable class of domains}.
	\end{aligned}
\end{equation}
Here $u(\Omega)$ denotes the solution of the problem specific PDE, which depends on the unknown domain $\Omega$.
Moreover, $\ell$ represents a loss function, such as a least squares function.

The coupling between $\Omega$ and its normal vector field $\bn$ makes the minimization of \eqref{eq:geometric_inverse_problem_with_TV_of_normal} algorithmically challenging. 
Moreover, since the integrand in \eqref{eq:TV_of_normal} is zero on flat regions (with constant normal) of $\Gamma$, \eqref{eq:TV_of_normal} and thus \eqref{eq:geometric_inverse_problem_with_TV_of_normal} cannot be expected to be shape differentiable, although the first (loss function) part pertaining to the PDE often is.
We therefore resort to a splitting approach in the spirit of \cite{GoldsteinOsher2009}, where $\bd = \nabla u$ was introduced as an independent variable in the context of the total variation functional \eqref{eq:TV_for_function}.
The variables $u$ and $\bd$ are coupled through a constraint, which is then handled in an Alternating Direction Method of Multipliers (ADMM) framework.
We refer the reader to \cite{GlowinskiMarroco1975,GoldsteinBressonOsher2010,GoldsteinODonoghueSetzerBaraniuk2014} for more on ADMM.

In our proposed splitting, we introduce a new variable $\bd$, independent of $\Omega$ and its normal vector field $\bn$, and require the coupling condition $\bd = D_\Gamma \bn$ to hold across $\Gamma$.
An outstanding feature of the proposed splitting is that the two subproblems, the minimization w.r.t.\ $\Omega$ and w.r.t.\ $\bd$, are directly amenable to numerical algorithms.
The former is a smooth shape optimization problem, and the latter turns out to be solvable explicitly as a shrinkage problem in the respective tangent spaces.

Although many optimization algorithms have recently been generalized to Riemannian manifolds \cite{Bacak2014,BergmannPerschSteidl2016,BergmannHerzogTenbrinckVidal-Nunez2019_preprint}, the split Bregman method for manifolds proposed in this paper is new to the best of our knowledge.
For a general overview of optimization on manifolds, we refer the reader to \cite{AbsilMahonySepulchre2008}.

The structure of the paper is as follows.
In \Cref{sec:analysis_of_tv_of_normal} we provide an analysis of \eqref{eq:TV_of_normal} and its properties. 
We also compare \eqref{eq:TV_of_normal} to geometric functionals appearing elsewhere in the literature.
In \cref{subsec:minimizers} we discuss the role of \eqref{eq:TV_of_normal} in optimization problems.
\Cref{sec:split_Bregman} is devoted to the formulation of an ADMM method which generalizes the split Bregman algorithm to the manifold-valued problem \eqref{eq:geometric_inverse_problem_with_TV_of_normal}.

This paper is accompanied by a companion paper \cite{BergmannHerrmannHerzogSchmidtVidalNunez2019:2_preprint} where we introduce a discrete counterpart of \eqref{eq:TV_of_normal} on simplicial meshes, as they are frequently used in finite element discretizations of PDEs.
On these piecewise flat surfaces, the normal vector $\bn$ jumps across edges and the definition \eqref{eq:TV_of_normal} needs to be generalized.
An appropriate discrete version of the split Bregman iteration will be presented in the companion paper, along with numerical results for geometric inverse problems.

\section{Total Variation of the Normal}
\label{sec:analysis_of_tv_of_normal}

In this section we discuss our proposal \eqref{eq:TV_of_normal} for the total variation of the normal on smooth surfaces in detail and relate it to other geometric functionals used previously in the literature.
A minimal background in differential geometry of surfaces is required, which we recall here and refer the reader to \cite{DoCarmo1976,GrayAbbenaSalamon2006,Kuehnel2013} for a thorough introduction.

\subsection{Preliminaries}
\label{subsec:preliminaries}

From this section onwards we assume that the boundary $\Gamma$ of the unknown bounded domain $\Omega$ is a smooth, compact, orientable manifold of dimension~2 without boundary, embedded in $\R^3$.
Therefore we can think of tangent vectors at $\bs \in \Gamma$ to be elements of the appropriate two-dimensional subspace (the tangent plane) of $\R^3$.
This tangent plane at $\bs$ is denoted by $\tangent{\bs}{\Gamma}$.
Each tangent plane is endowed with the Riemannian metric furnished by the embedding via the pull-back of the Euclidean metric in $\R^3$.
In other words, the inner product of two vectors $\bxi_1, \bxi_2 \in \tangent{\bs}{\Gamma}$ is simply given by $\Riemannian{\bxi_1}{\bxi_2} = \bxi_1^\top \bxi_2$.
In what follows, $\{\bxi_1(\bs),\bxi_2(\bs)\}$ denotes an orthonormal basis in $\tangent{\bs}{\Gamma}$.
As the following remark shows, the choice of this basis and how it varies with $\bs$ will not matter.

Outward pointing unit normal vectors $\bn$ along $\Gamma$ will be considered elements of the two-dimensional smooth manifold $\sphere{2}$.
The derivative or push-forward of the normal map $\bn$ is denoted by $D_\Gamma \bn$.
At a given $\bs \in \Gamma$, $D_\Gamma \bn$ thus maps tangent vectors $\bxi \in \tangent{\bs}{\Gamma}$ into tangent vectors $(D_\Gamma \bn) \, \bxi \in \tangent{\bn(\bs)}{\sphere{2}}$.
In what follows, we will suppress the dependence on the point $\bs \in \Gamma$ where possible. 

\begin{remark}
	\label{remark:TV_of_normal_is_independent_of_basis_in_tangent_space_on_Gamma}
	The total variation of the normal \eqref{eq:TV_of_normal} is independent of the choice of the orthonormal basis in the tangent spaces $\tangent{\bs}{\Gamma}$.
	To show this, it is enough to consider a point $\bs \in \Gamma$ and suppose that $\{\bxi_1,\bxi_2\}$ and $\{\boldeta_1,\boldeta_2\}$ are two orthonormal bases of $\tangent{\bs}{\Gamma}$.
	Then there exists an orthogonal matrix $Q \in \R^{3 \times 3}$ such that $\boldeta_i = Q \, \bxi_i$ holds for $i = 1,2$.
	For  $J \coloneqq \begin{bmatrix} (D_\Gamma \bn) \, \bxi_1 & (D_\Gamma \bn) \, \bxi_2 \end{bmatrix}$ the integrand in \eqref{eq:TV_of_normal} satisfies 
	\begin{multline*}
		\absRiemannian{(D_\Gamma \bn) \, \bxi_1}^2 + \absRiemannian{(D_\Gamma \bn) \, \bxi_2}^2
		=
		\trace(J^\top \! J)
		=
		\trace(J^\top \! J \, Q \, Q^\top)
		= 
		\trace(Q^\top J^\top \! J \, Q )
		\\
		=
		\absRiemannian{(D_\Gamma \bn) \, Q \, \bxi_1}^2 + \absRiemannian{(D_\Gamma \bn) \, Q \, \bxi_2}^2
		=
		\absRiemannian{(D_\Gamma \bn) \, \boldeta_1}^2 + \absRiemannian{(D_\Gamma \bn) \, \boldeta_2}^2
		.
	\end{multline*}
\end{remark}

Similarly, as we do for $\Gamma$, we consider $\sphere{2}$ embedded into $\R^3$ and therefore we can conceive the tangent space $\tangent{\bn(\bs)}{\sphere{2}}$ as a two-dimensional plane in $\R^3$ tangent to the sphere $\sphere{2}$.
We endow $\tangent{\bn(\bs)}{\sphere{2}}$ with the Riemannian metric furnished by the pull-back of the Euclidean metric as well, which we denote by $\Riemannian{\cdot}{\cdot}$ to distinguish it from the Riemannian metric on $\tangent{\bs}{\Gamma}$.
In fact, $\tangent{\bn(\bs)}{\sphere{2}}$ is clearly parallel to $\tangent{\bs}{\Gamma}$, see \Cref{fig:tangents}.
We can therefore identify the two tangent spaces and we write $\tangent{\bn(\bs)}{\sphere{2}} \cong \tangent{\bs}{\Gamma}$ to indicate this.
\begin{figure}[tbp]
	\centering
	\includegraphics[width=\textwidth]{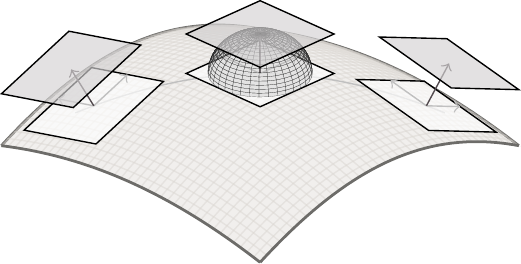}
	\caption{The figure shows part of a smooth surface $\Gamma$ and a representation of its tangent spaces at three points $\bs$ (light gray). The normal vectors are shown as well. The figure also illustrates that $\tangent{\bn(\bs)}{\sphere{2}}$ is parallel to $\tangent{\bs}{\Gamma}$.}
	\label{fig:tangents}
\end{figure}

\subsection{Relation to Curvature}
\label{subsec:relation_curvature}

In order to relate \eqref{eq:TV_of_normal} with regularizing geometric functionals appearing elsewhere in the literature, we take a second look at the integrand.
To this end, we recall that the normal field operator $\normalfield{\Gamma}: \Gamma \to \sphere{2}$ is also known as the \emph{Gauss map}; see for instance \cite[Chapter~3]{Kuehnel2013}.
Its derivative at $\bs \in \Gamma$ maps tangent directions in $\tangent{\bs}{\Gamma}$ into tangent directions in $\tangent{\bn(\bs)}{\sphere{2}} \cong \tangent{\bs}{\Gamma}$.
With the latter identification, the derivative of the Gauss map is known as the \emph{shape operator} 
\begin{equation*}
	\shape: \tangent{\bs}{\Gamma} \to \tangent{\bs}{\Gamma}
	.
\end{equation*}
Notice that $\shape$ is self-adjoint, i.e., $(\shape \bxi_1)^\top \bxi_2 = (\shape \bxi_2)^\top \bxi_1$ holds for all $\bs \in \Gamma$ and all $\bxi_1, \bxi_2 \in \tangent{\bs}{\Gamma}$; see for instance \cite[Lemma~13.14]{GrayAbbenaSalamon2006}.
The two eigenvalues of $\shape$ are the principal curvatures of the surface $\Gamma$ at $\bs$, denoted by $k_1$ and $k_2$.
This insight allows us to interpret the integrand in \eqref{eq:TV_of_normal} differently.

\begin{proposition}
	\label{proposition:interpretation_of_integrand_tv_of_normal_in_terms_of_principal_curvatures}
	The integrand in \eqref{eq:TV_of_normal} satisfies
	\begin{equation}
		\label{eq:interpretation_of_integrand_tv_of_normal_in_terms_of_principal_curvatures}
		\bigh(){\absRiemannian{(D_\Gamma \bn) \, \bxi_1}^2 + \absRiemannian{(D_\Gamma \bn) \, \bxi_2}^2}^{1/2}
		=
		\bigh(){k_1^2 + k_2^2}^{1/2}
		.
	\end{equation}
\end{proposition}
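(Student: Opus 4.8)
The plan is to recognize the left-hand side as the Frobenius (Hilbert--Schmidt) norm of the shape operator and then to evaluate that norm through the principal curvatures. First I would invoke the identification $\tangent{\bn(\bs)}{\sphere{2}} \cong \tangent{\bs}{\Gamma}$ established in the preliminaries, under which the push-forward $D_\Gamma \bn$ of the Gauss map becomes the self-adjoint shape operator $\shape\colon \tangent{\bs}{\Gamma} \to \tangent{\bs}{\Gamma}$. Because both Riemannian metrics are pull-backs of the Euclidean metric of $\R^3$, the norm $\absRiemannian{\,\cdot\,}$ is just the Euclidean norm on the relevant tangent plane, so that $\absRiemannian{(D_\Gamma \bn)\,\bxi_i} = \abs{\shape\,\bxi_i}_2$ for $i = 1,2$.

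Next I would reuse the computation already carried out in \Cref{remark:TV_of_normal_is_independent_of_basis_in_tangent_space_on_Gamma}: with $J \coloneqq \begin{bmatrix} (D_\Gamma \bn)\,\bxi_1 & (D_\Gamma \bn)\,\bxi_2 \end{bmatrix}$, the integrand squared equals $\trace(J^\top \! J)$. Since $\{\bxi_1, \bxi_2\}$ is orthonormal and the target tangent space is identified with the domain tangent space, the columns of $J$ are exactly the images $\shape\,\bxi_1, \shape\,\bxi_2$ expressed in the basis $\{\bxi_1,\bxi_2\}$; hence $J$ is precisely the matrix representation of $\shape$ in this orthonormal basis. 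The self-adjointness of $\shape$, recorded above, then makes this $2 \times 2$ matrix symmetric.

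Finally I would diagonalize. A symmetric matrix representing $\shape$ is orthogonally diagonalizable with the principal curvatures $k_1, k_2$ on the diagonal, so that $J^\top = J$ yields $\trace(J^\top \! J) = \trace(J^2) = k_1^2 + k_2^2$. Taking square roots gives \eqref{eq:interpretation_of_integrand_tv_of_normal_in_terms_of_principal_curvatures}. There is little genuine difficulty here, since every analytic ingredient---the parallel tangent-space identification, the self-adjointness of $\shape$, and the basis independence---is already in place; the only point that requires care is to use one and the same orthonormal basis on the domain and, via the identification, on the target, so that $J$ truly represents $\shape$ and not $\shape$ composed with a spurious rotation. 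Even if such a rotation were introduced, the basis independence from \Cref{remark:TV_of_normal_is_independent_of_basis_in_tangent_space_on_Gamma} guarantees that the value $k_1^2 + k_2^2$ is unchanged.
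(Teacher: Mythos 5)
Your argument is correct and follows essentially the same route as the paper: both identify $D_\Gamma \bn$ with the self-adjoint shape operator $\shape$ and reduce the claim, via the basis independence of \Cref{remark:TV_of_normal_is_independent_of_basis_in_tangent_space_on_Gamma}, to the fact that the sum of squares equals $k_1^2 + k_2^2$ in the principal-curvature eigenbasis. The paper simply selects that eigenbasis at the outset instead of passing through the symmetric $2\times 2$ matrix representation $J$ and then diagonalizing, but the mathematical content is identical.
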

\begin{proof}
	Consider the square of the integrand,
	\begin{equation*}
		\absRiemannian{(D_\Gamma \bn) \, \bxi_1}^2 + \absRiemannian{(D_\Gamma \bn) \, \bxi_2}^2
		=
		(\shape \bxi_1)^\top (\shape \bxi_1)
		+
		(\shape \bxi_2)^\top (\shape \bxi_2)
		.
	\end{equation*}
	Due to \Cref{remark:TV_of_normal_is_independent_of_basis_in_tangent_space_on_Gamma} we can choose $\bxi_1, \bxi_2$ to be normalized eigenvectors in $\tangent{\bs}{\Gamma}$ corresponding to the eigenvalues $k_1$, $k_2$, respectively.
	Therefore we get
	\begin{equation*}
		\absRiemannian{(D_\Gamma \bn) \, \bxi_1}^2 + \absRiemannian{(D_\Gamma \bn) \, \bxi_2}^2
		=
		k_1^2 \absRiemannian{\bxi_1}^2 
		+
		k_2^2 \absRiemannian{\bxi_2}^2 
		=
		k_1^2 + k_2^2
		.
	\end{equation*}
\end{proof}

\subsection{Comparison with Prior Work}
\label{subsec:comparison}

The representation of the integrand from \Cref{proposition:interpretation_of_integrand_tv_of_normal_in_terms_of_principal_curvatures} allows us to rewrite \eqref{eq:TV_of_normal} as the integral over the \emph{root mean square curvature},
\begin{equation}
	\label{eq:TV_of_normal_with_principal_curvatures}
	\abs{\bn}_{TV(\Omega)}
	=
	\int_\Gamma (k_1^2 + k_2^2)^{1/2} \, \ds
	,
\end{equation}
and compare it with related functionals appearing in the literature.
The quantity
\begin{equation}
	\label{eq:total_curvature_functional}
	\int_\Gamma (k_1^2 + k_2^2) \, \ds
\end{equation}
is known as the integral over the \emph{total curvature} (although this term is also used for other quantities in the literature).
The functional \eqref{eq:total_curvature_functional} has a long tradition in surface fairing applications and can be interpreted as a surface strain energy, see for instance \cite{LottPullin1988,HagenSchulze1987,WelchWitkin1992,HalsteadKassDeRose1993,WelchWitkin1994,Greiner1994,TasdizenWhitakerBurchardOsher2003}.
Since \eqref{eq:total_curvature_functional} corresponds to $\int_\Omega \abs{\nabla u}_2^2 \, \dx$ in imaging applications, which leads to a Laplacian in the associated optimality conditions (and thus also in the corresponding $L^2$-gradient flow), \eqref{eq:total_curvature_functional} tends to smooth the surface and its features. 
We also mention \cite{KimmelSochen2002} where smoothing by diffusion was employed to $\sphere{1}$-valued images via a mean curvature flow.

By contrast, the functional \eqref{eq:TV_of_normal_with_principal_curvatures} seems to have made very few appearances in the mathematical literature.
We are aware of the PhD thesis \cite[Chapter~6]{Maekawa1993} and the subsequent book publication \cite{PatrikalakisMaekawa2001} where it was used to guide mesh generation.
In \cite{AteshianRosenwasserMow1992,MarzkeTocheriMarzkeFemiani2012} the pointwise root mean square curvature is used as a measure of flatness in biomedical classification problems, in \cite{PullaRazdanFarin2001} for the purpose of surface segmentation and in \cite{WuMaMaJiaHu2010} it is used as an aid to visualize vascular structures.
We also mention that the logarithm of the root mean square curvature is known as the \emph{curvedness} and it plays a role in the classification of intermolecular interactions in crystals; see for instance \cite{McKinnonSpackmanMitchell2004}.
We are however not aware of any use of \eqref{eq:TV_of_normal} or its equivalent form \eqref{eq:TV_of_normal_with_principal_curvatures} as a prior in shape optimization problems.

We regard \eqref{eq:TV_of_normal} as a natural extension of the total variation seminorm \eqref{eq:TV_for_function} to the normal vector on surfaces, measuring surface flatness, but other extensions are certainly possible.
Notably, the authors in \cite{ElseyEsedoglu2009} propose the \emph{total absolute Gaussian curvature}
\begin{equation}
	\label{eq:total_absolute_Gaussian_curvature}
	\int_\Gamma \abs{k_1 \, k_2} \, \ds
\end{equation}
for the same purpose.
From the Gauss--Bonnet theorem (see for instance \cite[Chapter~27]{GrayAbbenaSalamon2006} or \cite[Chapter~4F]{Kuehnel2013}) it follows that the boundaries $\Gamma$ of \emph{convex} domains $\Omega$ will be the global minimizers of \eqref{eq:total_absolute_Gaussian_curvature}, and they yield a value of $4 \pi$.
Thus \eqref{eq:total_absolute_Gaussian_curvature} promotes domains which are \eqq{as convex as possible}.

It should be noted that the classical total variation seminorm \eqref{eq:TV_for_function} is not invariant with respect to scale.
In fact, it is easy to see that when the domain $\Omega \subset \R^d$ is replaced by $\scale \Omega$, and $u(x)$ is replaced by $u_\scale(x) \coloneqq u(x/\scale)$, then $\abs{u_\scale}_{TV(\scale \Omega)} = \scale^{d-1} \abs{u}_{TV(\Omega)}$ holds.
Similarly, we can show that the total variation on a 2-dimensional surface~$\Gamma$ scales as follows.

\begin{lemma}
	\label{lemma:behavior_of_tv_of_normal_under_scaling}
	Suppose that $\scale > 0$.
	Then 
	\begin{equation}
		\label{eq:behavior_of_tv_of_normal_under_scaling}
		\abs{\bn_\scale}_{TV(\scale \Gamma)} 
		= 
		\scale \, \abs{\bn}_{TV(\Gamma)}
		.
	\end{equation}
\end{lemma}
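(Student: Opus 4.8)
The plan is to transport every ingredient of \eqref{eq:TV_of_normal} through the dilation $\Phi \colon \R^3 \to \R^3$, $\Phi(\bx) \coloneqq \scale \, \bx$, which maps $\Gamma$ onto $\scale \Gamma$ and a point $\bs \in \Gamma$ onto $\scale \bs \in \scale \Gamma$. I would track four objects: the normal field, the tangent planes together with their orthonormal bases, the push-forward $D_\Gamma \bn$, and the surface measure $\ds$, and determine the scaling factor each contributes.

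First I would observe that the outer unit normal is invariant under dilation, since $\Phi$ preserves directions; hence $\bn_\scale(\scale \bs) = \bn(\bs)$. Next, because $\Phi$ acts as $\scale \, \mathrm{Id}$ on tangent vectors, the tangent plane $\tangent{\scale \bs}{\scale \Gamma}$ coincides, as a linear subspace of $\R^3$, with $\tangent{\bs}{\Gamma}$; in particular the same orthonormal basis $\{\bxi_1, \bxi_2\}$ serves on both surfaces and needs no rescaling. Differentiating the identity $\bn_\scale \circ \Phi = \bn$ along a curve in $\Gamma$ through $\bs$ with velocity $\bxi_i$ would then yield, by the chain rule, $(D_{\scale \Gamma} \bn_\scale)(\scale \bxi_i) = (D_\Gamma \bn) \, \bxi_i$, whence by linearity of the push-forward
\begin{equation*}
	(D_{\scale \Gamma} \bn_\scale) \, \bxi_i = \scale^{-1} \, (D_\Gamma \bn) \, \bxi_i, \qquad i = 1,2.
\end{equation*}
Consequently the integrand in \eqref{eq:TV_of_normal}, evaluated on $\scale \Gamma$ at $\scale \bs$ with respect to the basis $\{\bxi_1,\bxi_2\}$, equals $\scale^{-1}$ times the integrand on $\Gamma$ at $\bs$.

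Finally I would combine this with the transformation of the surface measure, $\ds_{\scale \Gamma} = \scale^2 \, \ds_\Gamma$, and change variables $\scale \bs \leftrightarrow \bs$ in the integral, producing the overall factor $\scale^{-1} \cdot \scale^2 = \scale$, which is exactly \eqref{eq:behavior_of_tv_of_normal_under_scaling}. As a shortcut one may instead invoke \Cref{proposition:interpretation_of_integrand_tv_of_normal_in_terms_of_principal_curvatures} and the representation \eqref{eq:TV_of_normal_with_principal_curvatures}, using that the principal curvatures scale as $k_i \mapsto \scale^{-1} k_i$; this leads to the same bookkeeping $\scale^{-1}\cdot\scale^2$. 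The computation is otherwise routine, and the only point requiring genuine care is the push-forward identity above, specifically the observation that the orthonormal basis of the tangent plane is left unchanged by the dilation, so that the factor $\scale^{-1}$ originates solely from the derivative of the normal map and not from any renormalization of the basis vectors.
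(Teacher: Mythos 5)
Your proof is correct. The paper actually states \Cref{lemma:behavior_of_tv_of_normal_under_scaling} without giving a proof (it only says \eqq{we can show}), so your argument fills a gap the authors left to the reader; the one-line route via \Cref{proposition:interpretation_of_integrand_tv_of_normal_in_terms_of_principal_curvatures} and \eqref{eq:TV_of_normal_with_principal_curvatures} --- principal curvatures scale as $k_i \mapsto \scale^{-1} k_i$ while the area element scales as $\scale^2$ --- is presumably what they had in mind, and your direct bookkeeping of the push-forward $(D_{\scale\Gamma}\bn_\scale)\,\bxi_i = \scale^{-1}(D_\Gamma\bn)\,\bxi_i$ (legitimate since, by \Cref{remark:TV_of_normal_is_independent_of_basis_in_tangent_space_on_Gamma}, one may reuse the same orthonormal basis on the dilated tangent plane) establishes the same factor $\scale^{-1}\cdot\scale^{2}=\scale$ from first principles.
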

\Cref{lemma:behavior_of_tv_of_normal_under_scaling} implies that the total variation of the normal \eqref{eq:TV_of_normal} will go to zero when the domain $\Omega$ degenerates to a point as $\scale \to 0$.
This is to be expected since the total variation \eqref{eq:TV_for_function} behaves in the same way.
In practice, this will not be an issue since \eqref{eq:TV_of_normal} will always be combined with other, e.g., data fidelity terms.
By contrast \eqref{eq:total_absolute_Gaussian_curvature} proposed in \cite{ElseyEsedoglu2009} \emph{is invariant} w.r.t.\ scaling and thus, in this particular respect, does not generalize \eqref{eq:TV_for_function}.

\section{Analysis of the Total Variation of the Normal}
\label{subsec:minimizers}

In this section we discuss some properties of the total variation of the normal functional \eqref{eq:TV_of_normal}.
	To this end, we begin by briefly recalling some elements of shape calculus, as necessary in order to study optimization problems in which the domain $\Omega \subset \R^3$ appears as an optimization variable.
	Then we discuss properties of (1.2).
In \cref{subsec:curves} we briefly comment on the case of curves, i.e., when $\Omega \subset \R^2$ and its boundary~$\Gamma$ is a one-dimensional manifold.

\subsection{Elements of Shape Calculus}
\label{subsec:shape_calculus}

Here we follow common practice and define transformations of $\Omega$ in terms of perturbations of identity.
That is, we consider families of perturbed domains $\Omega_\varepsilon$ whose material points are given by
\begin{equation}
	\label{eq:perturbation_of_identity}
	\bx_\varepsilon = \bT_\varepsilon(\bx) \coloneqq \bx + \varepsilon \bV(\bx).
\end{equation}
Here $\bV: \DD \to \R^3$ is some smooth vector field defined on a hold-all $\DD \supset \Omega$.
Suppose that $J$ is a functional depending on the domain.
Then we denote by $\d J(\Omega)[\bV]$ the directional shape derivative (also known as Eulerian derivative) of $J$ in the direction of $\bV$, i.e.,
\begin{equation*}
	\d J(\Omega)[\bV]
	= 
	\lim_{\varepsilon \searrow 0} \frac{J(\Omega_\varepsilon) - J(\Omega)}{\varepsilon}
	.
\end{equation*}
Likewise, we write $\d J(\Gamma)[\bV]$ for functionals $J$ depending on the surface $\Gamma$ of $\Omega$.
In particular, for an integral of the type
\begin{equation}
	\label{eq:SurfaceIntegralEps}
	J(\Gamma_\varepsilon) = \int_{\Gamma_\varepsilon} g(\varepsilon, \bs_\varepsilon) \, \ds_\varepsilon,
\end{equation}
the directional shape derivative is given by \cite[Eq.~(2.172)]{SokolowskiZolesio1992}
\begin{equation}
	\label{eq:ShapeDerivSurfLocalMaterial}
	\d J(\Gamma)[\bV] 
	= 
	\int_\Gamma g(0,\bs)\div_\Gamma \bV(\bs) + \d g[\bV](0,\bs) \, \ds,
\end{equation}
where the material derivative $\d g[\bV]$ is defined as the total derivative
\begin{align*}
	\d g[\bV](0,\bs) 
	= 
	\left.\frac{\d}{\d \varepsilon} \right\vert_{\varepsilon=0}\, g(\varepsilon, \bs_\varepsilon) 
	=  
	\left.\frac{\d}{\d \varepsilon} \right\vert_{\varepsilon=0}\, g(\varepsilon, \bT_\varepsilon(\bs))
\end{align*}
and $\div_\Gamma \bV$ denotes the (tangential) divergence of $\bV$ along $\Gamma$. 
It is related to the divergence in $\R^3$ via
\begin{align*}
	\div_\Gamma \bV 
	= 
	\sum_{i=1}^2 \bxi_i^\top (D\bV) \, \bxi_i 
	= 
	\div \bV - \bn^\top (D\bV)\, \bn
	.
\end{align*}
In the following, we simply write $g$ instead of $g(0,\cdot)$ and in addition to the material derivative, we also introduce the (local) shape derivative as the partial derivative
$g'[\bV] \coloneqq (\partial/\partial \varepsilon) \vert_{\varepsilon=0}\, g(\varepsilon, \cdot)$. Hence, both are related to each other via
\begin{align}\label{eq:MaterialLocal}
	g'[\bV] = \d g[\bV] - (Dg) \bV.
\end{align}
See for instance \cite[Eq.~(2.163)]{SokolowskiZolesio1992}. 
If the shape derivative $g'[\bV]$ exists, the shape derivative $\d J(\Gamma)[\bV]$ can alternatively be expressed as
\begin{equation}
	\label{eq:ShapeDerivSurfLocal}
	\d J(\Gamma)[\bV] 
	= 
	\int_\Gamma \bV^\top \bn \bigh[]{ (Dg) \bn + (k_1 + k_2) \, g } + g'[\bV] \, \ds,
\end{equation}
see \cite[Eq.~(2.174)]{SokolowskiZolesio1992}.
Furthermore, spatial derivatives and material derivatives of differentiable fields $\bF$ fulfill
\begin{equation}\label{eq:SpatialMaterialSwap}
	\begin{aligned}
		D(\d\bF[\bV]) &= D(\bF'[\bV]) + D((D\bF)\bV) \\
		&= (D\bF)'[\bV] + D((D\bF)\bV) \\
		&= (D\bF)'[\bV] + (D(D\bF)\bV) + (D\bF) (D\bV)\\
		&= \d(D\bF)[\bV] + (D\bF) (D\bV)
		.
	\end{aligned}
\end{equation}
The symbol $Dg$ in \eqref{eq:ShapeDerivSurfLocal}, which we will need occasionally, stands for the \eqq{full} derivative (in all three spatial directions) of a function $g$ defined in a neighborhood of $\Gamma$.
We recall that we are denoting the derivative in tangential directions of functions defined on $\Gamma$ by the symbol $D_\Gamma$.
Notice that $Dg$ and $D_\Gamma g$ are related by $Dg = D_\Gamma g + (Dg)\bn \bn^\top$.

\begin{lemma}
	\label{lemma:Stephan}
	Suppose that $\ba$, $\bb$ are $C^1$-vector fields on $\Gamma$ with values in $\R^3$, and that $\bV$ is a $C^1$-vector field which is normal, i.e., $\bV = (\bV^\top \bn) \, \bn$ holds on $\Gamma$.
	Then we have
	\begin{multline}
		\label{eq:Stephan}
		\int_\Gamma \ba^\top (D_\Gamma \bV) \, \bb \, \ds 
		\\
		=
		\int_\Gamma \bV^\top \bn \bigh[]{-\div_\Gamma((\ba^\top \bn) \, \bb) + (\ba^\top \bn) (\bb^\top \bn) \, (k_1 + k_2) + \ba^\top (D_\Gamma \bn) \, \bb} \, \ds
		.
	\end{multline}
\end{lemma}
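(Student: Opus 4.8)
The plan is to use the normality hypothesis $\bV = (\bV^\top\bn)\,\bn$ to factor out a single scalar surface function and then to shift the tangential derivative off $\bV$ by an integration by parts on the closed surface $\Gamma$. First I would set $\phi \coloneqq \bV^\top\bn$, so that $\bV = \phi\,\bn$, and apply the Leibniz rule for the tangential derivative, giving $(D_\Gamma\bV)\,\bb = \bigl((D_\Gamma\phi)\,\bb\bigr)\,\bn + \phi\,(D_\Gamma\bn)\,\bb$. Contracting with $\ba^\top$ splits the integrand as $\ba^\top(D_\Gamma\bV)\,\bb = (\ba^\top\bn)\,(D_\Gamma\phi)\,\bb + \phi\,\ba^\top(D_\Gamma\bn)\,\bb$. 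After integration, and recalling $\phi = \bV^\top\bn$, the second summand already reproduces the term $\int_\Gamma(\bV^\top\bn)\,\ba^\top(D_\Gamma\bn)\,\bb\,\ds$ of the right-hand side, so the whole identity reduces to handling the first summand.

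Next I would treat the remaining term $\int_\Gamma(\ba^\top\bn)\,(D_\Gamma\phi)\,\bb\,\ds$ by integration by parts on the surface. Applying the Leibniz rule for the tangential divergence to the $\R^3$-valued field $(\ba^\top\bn)\,\bb$ against the scalar $\phi$, namely $\div_\Gamma\bigl(\phi\,(\ba^\top\bn)\,\bb\bigr) = \phi\,\div_\Gamma\bigl((\ba^\top\bn)\,\bb\bigr) + (\ba^\top\bn)\,(D_\Gamma\phi)\,\bb$, I can rewrite the integrand as $(\ba^\top\bn)\,(D_\Gamma\phi)\,\bb = \div_\Gamma\bigl(\phi\,(\ba^\top\bn)\,\bb\bigr) - \phi\,\div_\Gamma\bigl((\ba^\top\bn)\,\bb\bigr)$. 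Here I use that the tangential gradient $D_\Gamma\phi$ is tangential, so that contracting it with $\bb$ automatically selects only the tangential part, consistent with the convention that $D_\Gamma$ sees only tangential directions.

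Then I would integrate over $\Gamma$ and invoke the tangential divergence theorem on a compact surface without boundary, $\int_\Gamma\div_\Gamma\bF\,\ds = \int_\Gamma(k_1+k_2)\,(\bF^\top\bn)\,\ds$ for a $C^1$ field $\bF$ (see, e.g., \cite{SokolowskiZolesio1992}), which carries no boundary contribution because $\Gamma$ is closed. Applied to $\phi\,(\ba^\top\bn)\,\bb$ and using $\bigl(\phi\,(\ba^\top\bn)\,\bb\bigr)^\top\bn = \phi\,(\ba^\top\bn)(\bb^\top\bn)$, this gives $\int_\Gamma\div_\Gamma\bigl(\phi\,(\ba^\top\bn)\,\bb\bigr)\,\ds = \int_\Gamma(k_1+k_2)\,\phi\,(\ba^\top\bn)(\bb^\top\bn)\,\ds$. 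Substituting this back and replacing $\phi$ by $\bV^\top\bn$ reproduces the two remaining right-hand side terms $-\div_\Gamma((\ba^\top\bn)\,\bb)$ and $(\ba^\top\bn)(\bb^\top\bn)(k_1+k_2)$, completing the identity. The curvature factor $k_1+k_2$ enters precisely as the surface divergence of the normal, $\div_\Gamma\bn = \trace(\shape) = k_1+k_2$.

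The main obstacle, and the step I would treat most carefully, is the correct application of the tangential divergence theorem to the field $(\ba^\top\bn)\,\bb$, which in general has a nonzero normal component: one must confirm that no boundary term appears—guaranteed since $\Gamma$ is compact and without boundary, as assumed from \Cref{subsec:preliminaries} onwards—and that it is exactly this normal component that produces the $(k_1+k_2)$ term via $\div_\Gamma\bn = k_1+k_2$. A secondary, purely bookkeeping difficulty is to track which quantities are tangential and which are normal, so that the Leibniz rules for $D_\Gamma$ and $\div_\Gamma$ are applied consistently and the contraction $(D_\Gamma\phi)\,\bb$ correctly selects only the tangential part of $\bb$. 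The normality of $\bV$ is exactly what allows the single scalar $\phi = \bV^\top\bn$ to factor out of every term.
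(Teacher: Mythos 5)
Your proof is correct and follows essentially the same route as the paper: both arguments rest on the product rule for $D_\Gamma$ applied to $(\bV^\top\bn)\,\bn$ together with the tangential Stokes formula \eqref{eq:TangentStokesVanilla}, which supplies the $(k_1+k_2)$ term. The only difference is cosmetic: you invoke the normality of $\bV$ at the outset to write $\bV=\phi\,\bn$, whereas the paper carries the tangential components $(\bV^\top\bxi_i)\,\bxi_i$ through the computation and discards them at the end.
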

\begin{proof}
	The general tangential Stokes formula \cite[Eq.~(5.27)]{DelfourZolesio2011} states that
	\begin{equation}\label{eq:TangentStokesVanilla}
		\int_\Gamma c\div_\Gamma \bV \ \ds 
		=
		\int_{\Gamma}\bV^\top\bn \, c \, (k_1 + k_2) \, \ds
		- 
		\int_\Gamma (D_\Gamma  c) \, \bV \, \ds
	\end{equation}
	holds for all $C^1$-vector fields $\bV$.
	We split $\bV$ into its normal and tangential components according to $\bV = (\bV^\top\bn) \, \bn + \sum_{i=1}^2 (\bV^\top\bxi_i) \, \bxi_i$ and arrive at
	\begin{align*}
		\MoveEqLeft
		\int_\Gamma \ba^\top (D_\Gamma \bV) \, \bb \, \ds 
		= 
		\int_\Gamma \ba^\top D_\Gamma ((\bV^\top\bn) \, \bn) \, \bb + \sum_{i=1}^2 \ba^\top D_\Gamma ((\bV^\top\bxi_i) \, \bxi_i) \, \bb \, \ds 
		\\
		&
		= 
		\int_\Gamma D_\Gamma (\bV^\top\bn) (\ba^\top \bn) \, \bb + (\bV^\top\bn) \, \ba^\top (D_\Gamma \bn) \, \bb 
		\\
		&
		\qquad
		+ 
		\sum_{i=1}^2 D_\Gamma (\bV^\top\bxi_i) (\ba^\top \bxi_i) \, \bb + (\bV^\top\bxi_i) \, \ba^\top (D_\Gamma \bxi_i) \, \bb \, \ds 
		\quad \text{(by the product rule)}
		\\
		&
		=
		\int_\Gamma \bV^\top\bn \left[(\ba^\top\bn) (\bn^\top\bb) \, (k_1 + k_2) - \div_\Gamma ((\ba^\top\bn) \, \bb) + \ba^\top (D_\Gamma \bn) \, \bb \right] 
		\\
		&
		\qquad
		+ 
		\sum_{i=1}^2\bV^\top\bxi_i\left[ \ba^\top (D_\Gamma \bxi_i) \, \bb - \div_\Gamma((\ba^\top \bxi_i) \, \bb)\right] \, \ds
		\quad
		(\text{by \eqref{eq:TangentStokesVanilla}})
		\\
		&
		=
		\int_\Gamma \bV^\top\bn \left[(\ba^\top\bn) (\bn^\top\bb) \, (k_1 + k_2) - \div_\Gamma ((\ba^\top\bn) \, \bb) + \ba^\top (D_\Gamma \bn) \, \bb \right]
		.
	\end{align*}
	In the last step we used that $\bV$ is normal and thus $\bV^\top \bxi_i = 0$ holds.
\end{proof}

\subsection{Properties of the Total Variation of the Normal}
\label{subsec:properties_tv_of_normal}

As part of this section, we seek to establish shape differentiability of our novel objective \eqref{eq:TV_of_normal}. 
To this end, we utilize that \eqref{eq:TV_of_normal} is a composition of smooth functions except in the presence of flat regions of positive measure on $\Gamma$. 
Hence, we first present some results of the material derivatives for the quantities involved, which by themselves are also interesting for a wide variety of other problems.

With respect to the outer normal $\bn$, we first note that under an appropriate regularity assumption, the material derivative exists and is given by
\begin{equation}
	\label{eq:material_derivative_of_the_normal}
	\begin{aligned}
		\d \bn[\bV] 
		=
		- (D_\Gamma \bV)^\top \bn.
	\end{aligned}
\end{equation}
This result can be found, for instance, in~\cite[Eq.~(3.168)]{SokolowskiZolesio1992} or \cite[Lemma~3.3.6]{Schmidt2010}. Notice that $\d \bn[\bV]$ is tangential because
\begin{equation}
	\label{eq:dnV_is_tangential}
	- \bn^\top \d \bn[\bV]
	=
	\bn^\top (D_\Gamma \bV)^\top \bn
	=
	\bn^\top \bigh[]{(D\bV)^\top - \bn \bn^\top (D\bV)^\top} \bn
	=
	0
	.
\end{equation}

\begin{theorem}
	\label{theorem:tangent_is_differentiable}
	Suppose that the orthonormal basis components $\bxi_1$ and $\bxi_2$ are smooth vector fields on a relatively open part $\Gamma_0 \subset \Gamma$.
	Then they are shape differentiable and their material derivatives are given by
	\begin{equation}
		\label{eq:material_derivatives_of_tangent_basis}
		\begin{aligned}
			\d \alert{\bxi_1}[\bV] & = (D\bV) \, \bxi_1 - (\bxi_1^\top (D\bV) \, \bxi_1) \, \bxi_1 \\
			\d \alert{\bxi_2}[\bV] & = (D\bV) \, \bxi_2 - (\bxi_2^\top (D\bV) \, \bxi_2) \, \bxi_2 - (\bxi_1^\top (D\bV+D\bV^\top) \, \bxi_2) \, \bxi_1.
		\end{aligned}
	\end{equation}
\end{theorem}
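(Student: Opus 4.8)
The plan is to exhibit, on each perturbed surface $\Gamma_\varepsilon$, an orthonormal tangent frame depending smoothly on $\varepsilon$ and reducing to $\{\bxi_1,\bxi_2\}$ at $\varepsilon = 0$, and then to differentiate it. \emph{First step.} Since $\bT_\varepsilon$ carries a curve in $\Gamma$ through $\bs$ to a curve in $\Gamma_\varepsilon$ through $\bs_\varepsilon$, the push-forward $(D\bT_\varepsilon)\,\bxi_i$ is tangent to $\Gamma_\varepsilon$ at $\bs_\varepsilon$. Because $D\bT_\varepsilon = \mathrm{Id} + \varepsilon\,D\bV$, it equals $\bxi_i$ at $\varepsilon = 0$ and has $\varepsilon$-derivative $(D\bV)\,\bxi_i$ there. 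The push-forward is in general neither of unit length nor orthogonal, so I would orthonormalize it by Gram--Schmidt, producing a smooth orthonormal tangent frame $\{\bxi_1(\varepsilon),\bxi_2(\varepsilon)\}$ on $\Gamma_\varepsilon$ with $\bxi_i(0) = \bxi_i$. This already establishes shape differentiability, and the material derivative is $\d\bxi_i[\bV] = \frac{\d}{\d\varepsilon}\,\bxi_i(\varepsilon)$ at $\varepsilon = 0$, with $\bxi_i(\varepsilon)$ evaluated at the moving point $\bs_\varepsilon$.

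\emph{Differentiation.} Writing $\ba_i(\varepsilon) \coloneqq (D\bT_\varepsilon)\,\bxi_i$, Gram--Schmidt reads $\bxi_1(\varepsilon) = \ba_1/\abs{\ba_1}_2$ and $\bxi_2(\varepsilon) = \bb/\abs{\bb}_2$ with $\bb \coloneqq \ba_2 - (\ba_2^\top\bxi_1(\varepsilon))\,\bxi_1(\varepsilon)$. Differentiating a normalized vector $\bxi_1(\varepsilon)$ at a point where $\abs{\ba_1}_2 = 1$ gives
\[
	\d\bxi_1[\bV] = (D\bV)\,\bxi_1 - (\bxi_1^\top (D\bV)\,\bxi_1)\,\bxi_1,
\]
which is the first line of \eqref{eq:material_derivatives_of_tangent_basis}. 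For the second line I would differentiate $\bb$ at $\varepsilon = 0$, using $\ba_2(0) = \bxi_2$, $\bxi_1(0) = \bxi_1$, the orthogonality $\ba_2(0)^\top\bxi_1(0) = 0$, and the value of $\d\bxi_1[\bV]$ just found, and then apply the normalization rule once more with $\bb(0) = \bxi_2$. The coefficient of $\bxi_1$ that emerges is $\bxi_1^\top(D\bV)\,\bxi_2 + \bxi_2^\top(D\bV)\,\bxi_1$, which I rewrite as $\bxi_1^\top(D\bV + D\bV^\top)\,\bxi_2$ via $\bxi_2^\top(D\bV)\,\bxi_1 = \bxi_1^\top(D\bV)^\top\bxi_2$, yielding the stated formula.

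\emph{Main obstacle and check.} The principal subtlety is that, unlike the outer unit normal, a tangent frame on $\Gamma_\varepsilon$ is not canonically determined: any pointwise rotation of $\{\bxi_1(\varepsilon),\bxi_2(\varepsilon)\}$ is again admissible, and different transport conventions yield material derivatives differing by an infinitesimal tangential rotation. Gram--Schmidt fixes this ambiguity smoothly in terms of the given frame, and it is this choice that underlies \eqref{eq:material_derivatives_of_tangent_basis}. I expect the bookkeeping in the second Gram--Schmidt step to be the main source of error, and would guard against it with a consistency check: the constraints $\bxi_i^\top\bn = 0$ and $\bxi_i^\top\bxi_j = \delta_{ij}$ hold on every $\Gamma_\varepsilon$, so differentiating them and substituting \eqref{eq:material_derivative_of_the_normal} and \eqref{eq:material_derivatives_of_tangent_basis} must return $0$; that this indeed holds confirms the formulas.
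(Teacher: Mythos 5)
Your proposal is correct and follows essentially the same route as the paper's proof: push the frame forward with $D\bT_\varepsilon = \id + \varepsilon\, D\bV$, re-orthonormalize via Gram--Schmidt, and differentiate at $\varepsilon = 0$ (the paper merely prefaces this with a local orthogonal parametrization $\bh$ to define $\bxi_i$, which does not change the computation). Your closing consistency check via $\bxi_i^\top\bn = 0$ and $\bxi_i^\top\bxi_j = \delta_{ij}$ is a sensible addition not present in the paper.
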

The requirement that $\bxi_{1,2}$ be smooth is not restrictive.
The asymmetry in \eqref{eq:material_derivatives_of_tangent_basis} stems from the fact that we chose to transform the first basis vector $\bxi_1$ along with \eqref{eq:perturbation_of_identity} and then to orthogonalize the second basis vector $\bxi_2$ w.r.t.\ the first.
\begin{proof}
	The proof is by construction. Beginning from a local parametrization $\bh$ of the surface, we give an explicit formula for the tangent basis. 
	The perturbed surface $\Gamma_\varepsilon$ is then expressed via a perturbed parametrization $\bh_\varepsilon \coloneqq \bT_\varepsilon\circ\bh$, where $\bT_\varepsilon$ is given by~\eqref{eq:perturbation_of_identity}. We derive a formula for the perturbed tangent basis via the Gram--Schmidt process.
	The desired material derivatives are then given by the total derivative w.r.t.\ $\varepsilon = 0$.

	Let $\bh:U \subset {} \R^2 \to \R^3$ be a local smooth orthogonal parametrization of $\Gamma_0$, i.e., the derivative $D\bh$ is a matrix with orthonormal columns, such that $\bs\in\Gamma$ is locally given by $\bs = \bh(\bx)$ for some $\bx \in U$. 
	Hence, we can define a smooth, orthonormal set of tangent vectors $\bxi_1,\bxi_2$ via
	\begin{equation}
		\label{eq:xi_i}
		\bxi_i(\bs) \coloneqq \frac{D \bh(\bx) \, \be_i}{\abs{D \bh(\bx) \, \be_i}_2}, \quad i = 1,2,
	\end{equation}
	where $\be_i$ is the $i$-th canonical basis vector of $\R^3$. 
	With respect to $\bxi_1$, we arrive at the  normalized tangent vector of the perturbed surface as
	\begin{equation}
		\label{eq:xi_1shape}
		\begin{aligned}
			\bxi_{1,\varepsilon}(\bs_\varepsilon) &
			\coloneqq 
			\frac{D_\bx \bT_\varepsilon(\bh(\bx)) \, \be_1}{\abs{D_\bx \bT_\varepsilon(\bh(\bx)) \, \be_1}_2} 
			= 
			\frac{D_\bs \bT_\varepsilon(\bs) D \bh(\bx) \, \be_1}{\abs{D_\bs \bT_\varepsilon(\bs) D \bh(\bx) \, \be_1}_2}
			\\
			&
			= 
			\frac{D_\bs \bT_\varepsilon(\bs) \, \bxi_1(\bs)}{\abs{D_\bs \bT_\varepsilon(\bs) \, \bxi_1(\bs)}_2}
			= 
			\frac{(\id + \varepsilon \, D\bV(\bs)) \, \bxi_1(\bs)}{\abs{(\id + \varepsilon \, D\bV(\bs)) \, \bxi_1(\bs)}_2}
			.
		\end{aligned}
	\end{equation}
	Regarding $\bxi_2$, we proceed in a similar way, but have to apply a Gram--Schmidt step to obtain an orthonormal set of perturbed tangent vectors. 
	Hence, $\bxi_{2,\varepsilon}$ is given by
	\begin{equation}
		\label{eq:xi_2shape}
		\begin{aligned}
			\bxi_{2,\varepsilon}(\bs_\varepsilon) 
			&
			\coloneqq 
			\frac{D_\bs \bT_\varepsilon(\bs) \, \bxi_2 - (\bxi_{1,\varepsilon}^\top D_\bs \bT_\varepsilon(\bs) \, \bxi_2) \, \bxi_{1,\varepsilon}}{\abs{D_\bs \bT_\varepsilon(\bs) \, \bxi_2 - (\bxi_{1,\varepsilon}^\top D_\bs \bT_\varepsilon(\bs) \, \bxi_2) \, \bxi_{1,\varepsilon}}_2}
			.
		\end{aligned}
	\end{equation}
	A straightforward differentiation with respect to $\varepsilon=0$ results in the material derivatives given in \eqref{eq:material_derivatives_of_tangent_basis}.
\end{proof}

\begin{theorem}
	\label{theorem:tangent_Jacobi_normal_is_differentiable}
	Under the assumptions of the previous theorem, the derivative $D_\Gamma \bn$ of the normal is shape differentiable.
	The material derivatives of the directional derivatives of $\bn$ in the directions of $\bxi_{1,2}$ are  
	\begin{align}
		\d \bigh(){(D_\Gamma \bn) \, \bxi_i}[\bV]
		= 
		D_\Gamma(\d \bn[\bV]) \, \bxi_i - (D_\Gamma\bn) (D_\Gamma\bV) \, \bxi_i + (D_\Gamma\bn) \, (\d \bxi_i[\bV])
		.
		\label{eq:material_derivative_of_Dn_xi}
	\end{align}
\end{theorem}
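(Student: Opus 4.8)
The plan is to reduce the statement to the Leibniz (product) rule for material derivatives, applied to the product $(D_\Gamma \bn) \, \bxi_i$ of the matrix field $D_\Gamma \bn$ and the tangent vector field $\bxi_i$, combined with the commutation identity \eqref{eq:SpatialMaterialSwap}. Since the material derivative is a derivative along the flow $\bT_\varepsilon$, it obeys the product rule, so I would first write
\begin{equation*}
	\d\bigl( (D_\Gamma \bn) \, \bxi_i \bigr)[\bV]
	=
	\bigl( \d (D_\Gamma \bn)[\bV] \bigr) \, \bxi_i
	+
	(D_\Gamma \bn) \, \d \bxi_i[\bV]
	,
\end{equation*}
which is legitimate once both factors are known to be shape differentiable. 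The second factor is covered by \Cref{theorem:tangent_is_differentiable}, and the resulting term $(D_\Gamma \bn) \, \d \bxi_i[\bV]$ is already the last term of the claimed identity \eqref{eq:material_derivative_of_Dn_xi}. It therefore remains to identify $\d(D_\Gamma \bn)[\bV]$ and, along the way, to establish its existence.

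For this first factor I would invoke the swap identity \eqref{eq:SpatialMaterialSwap} with $\bF = \bn$, which after rearrangement reads $\d(D\bn)[\bV] = D(\d\bn[\bV]) - (D\bn)(D\bV)$, and here $\d\bn[\bV]$ is available in closed form from \eqref{eq:material_derivative_of_the_normal}. The point is then to transfer this from the full derivative $D$ to the tangential derivative $D_\Gamma$. To do so I would contract with the tangential vector $\bxi_i$ from the right and repeatedly exploit $\bn^\top \bxi_i = 0$ together with the decomposition $Dg = D_\Gamma g + (Dg)\,\bn\bn^\top$: applying a full derivative to a tangential direction returns the corresponding tangential derivative, so that $D(\d\bn[\bV])\,\bxi_i = D_\Gamma(\d\bn[\bV])\,\bxi_i$ and $(D\bV)\,\bxi_i = (D_\Gamma\bV)\,\bxi_i$. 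Collecting the three contributions and substituting the explicit expressions \eqref{eq:material_derivative_of_the_normal} and \eqref{eq:material_derivatives_of_tangent_basis} then reproduces \eqref{eq:material_derivative_of_Dn_xi}.

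The main obstacle is precisely this passage between the full derivative $D$ and the tangential derivative $D_\Gamma$. Whereas \eqref{eq:SpatialMaterialSwap} holds for $D$ because the full spatial derivative and the local shape derivative are partial derivatives in the independent variables $\bx$ and $\varepsilon$ and hence commute, the operator $D_\Gamma$ is tied to the moving surface $\Gamma_\varepsilon$ and therefore carries its own $\varepsilon$-dependence. To keep the normal-direction corrections under control I would fix a smooth extension of $\bn$ to a neighbourhood of $\Gamma$ for which $(D\bn)\,\bn = 0$ -- for instance the gradient of the signed distance function, which also renders $D\bn$ symmetric and equal to $D_\Gamma\bn$ on $\Gamma$. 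With this choice the spurious terms of the form $(D\bn)\,\bn\,(\bn^\top (D_\Gamma\bV)\,\bxi_i)$ and $(D\bn)\,\bn\,(\bn^\top \d\bxi_i[\bV])$, which would otherwise obstruct the replacement of $D\bn$ by $D_\Gamma\bn$, vanish identically, and the full-derivative computation collapses exactly onto the tangential formula claimed. The one bookkeeping step that must be carried out with care is to check that the material derivative of $(D_\Gamma\bn)\,\bxi_i$ is insensitive to the chosen extension, which holds because $\bxi_{i}$ remains tangent to $\Gamma_\varepsilon$ for every $\varepsilon$; everything else is a direct substitution.
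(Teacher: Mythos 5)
Your proposal is correct and follows essentially the same route as the paper: product rule for the material derivative, the swap identity \eqref{eq:SpatialMaterialSwap} applied to $\bn$, and reduction from $D$ to $D_\Gamma$ via a normal extension with $(D\bn)\,\bn = \bnull$ together with the tangentiality of $\bxi_i$ and $\d\bxi_i[\bV]$. The only (cosmetic) difference is that the paper replaces $D_\Gamma\bn$ by $D\bn$ before applying the product rule, whereas you do so afterwards; your extra remarks on extension independence are a welcome clarification but not a departure.
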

\begin{proof}
	With the material derivative of both the normal \eqref{eq:material_derivative_of_the_normal} and tangent \eqref{eq:material_derivatives_of_tangent_basis} at hand, we apply the chain rule and due to the relationship between spatial and material derivatives~\eqref{eq:SpatialMaterialSwap} we arrive at
	\begin{align*}
		\d \bigh(){(D_\Gamma \bn) \, \bxi_i}[\bV] 
		&
		=
		\d \bigh(){(D\bn) \, \bxi_i}[\bV] 
		\notag
		\\
		&
		= 
		\d (D \bn)[\bV] \, \bxi_i + (D\bn) \, (\d \bxi_i[\bV])
		\notag
		\\
		&
		= 
		D(\d \bn[\bV]) \, \bxi_i - (D\bn) (D\bV) \, \bxi_i + (D\bn) \, (\d \bxi_i[\bV])
		.
	\end{align*}
	Because $\d \bxi_i[\bV]$ is tangential due to~\eqref{eq:material_derivatives_of_tangent_basis}, the reduction of the full derivative $D$ to the intrinsic derivative $D_\Gamma$ on the surface is straightforward provided that $\bn$ is assumed to be extended constantly into a tubular neighborhood of $\Gamma$, i.e., $(D\bn)\bn = \bnull$.
\end{proof}

\begin{theorem}
	\label{theorem:problem_is_differentiable}
	Suppose that the principal curvatures do not vanish simultaneously on $\Gamma$ except possibly on a set of measure zero.
	Then \eqref{eq:TV_of_normal} is shape differentiable.
\end{theorem}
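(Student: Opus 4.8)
\emph{Proof plan.} The plan is to write \eqref{eq:TV_of_normal} as $J(\Gamma) = \int_\Gamma g \,\ds$ with integrand $g = \bigl(\absRiemannian{(D_\Gamma\bn)\,\bxi_1}^2 + \absRiemannian{(D_\Gamma\bn)\,\bxi_2}^2\bigr)^{1/2}$ and to apply the surface shape derivative formula \eqref{eq:ShapeDerivSurfLocalMaterial}, so that it remains to produce the material derivative $\d g[\bV]$ and to justify differentiating under the integral sign. Abbreviating the squared integrand by $q \coloneqq \absRiemannian{(D_\Gamma\bn)\,\bxi_1}^2 + \absRiemannian{(D_\Gamma\bn)\,\bxi_2}^2$, we have $g = \sqrt q$ and, by \Cref{proposition:interpretation_of_integrand_tv_of_normal_in_terms_of_principal_curvatures}, $q = k_1^2 + k_2^2 = \trace(\shape^2)$. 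The map $q \mapsto \sqrt q$ is smooth on $\{q > 0\}$ but only Hölder continuous at $q = 0$, so the entire difficulty is concentrated on the flat set $N \coloneqq \{\bs \in \Gamma : k_1(\bs) = k_2(\bs) = 0\} = \{q = 0\}$, which is a null set by hypothesis.

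First I would record that $q$ is itself smooth on all of $\Gamma$ and shape differentiable everywhere, with $\d q[\bV]$ linear in $\bV$. Although the individual principal curvatures are not smooth at umbilics, the combination $q = \trace(\shape^2)$ is a smooth, frame-independent function of the entries of the shape operator; combining the product rule with the material derivatives from \Cref{theorem:tangent_is_differentiable} and \Cref{theorem:tangent_Jacobi_normal_is_differentiable} yields $\d q[\bV] = 2\sum_{i=1}^2 \bigl((D_\Gamma\bn)\,\bxi_i\bigr)^\top \d\bigl((D_\Gamma\bn)\,\bxi_i\bigr)[\bV]$, which in particular vanishes on $N$, where $D_\Gamma\bn = \bnull$. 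Transporting to the reference surface by $\bT_\varepsilon$ from \eqref{eq:perturbation_of_identity}, the pulled-back squared integrand $Q(\varepsilon,\bs) \coloneqq q(\varepsilon,\bT_\varepsilon(\bs))$ is a nonnegative function that is $C^2$ (indeed smooth) jointly in $(\varepsilon,\bs)$ on $(-\varepsilon_0,\varepsilon_0) \times \Gamma$, so that compactness of $\Gamma$ furnishes a uniform bound $\abs{Q_{\varepsilon\varepsilon}} \le M$. Writing $\omega_\varepsilon$ for the tangential Jacobian (smooth in $\varepsilon$, with $\omega_0 = 1$ and $\partial_\varepsilon \omega_\varepsilon\vert_{\varepsilon=0} = \div_\Gamma\bV$), we obtain $J(\Gamma_\varepsilon) = \int_\Gamma \sqrt{Q(\varepsilon,\bs)}\,\omega_\varepsilon(\bs)\,\ds$.

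On $N^c$ we have $Q(0,\bs) > 0$, hence $\varepsilon \mapsto \sqrt{Q(\varepsilon,\bs)}$ is smooth near $\varepsilon = 0$ and the integrand's difference quotient converges pointwise to $\d g[\bV] + g\,\div_\Gamma\bV$ with $\d g[\bV] = \d q[\bV]/(2\sqrt q)$; on the null set $N$ the pointwise limit is immaterial. The main obstacle is to interchange the limit $\varepsilon \searrow 0$ with the integral in spite of $\partial_q \sqrt q$ blowing up as $q \to 0$. Here I would invoke Glaeser's inequality, the elementary estimate that a nonnegative $C^2$ function $Q$ with $\abs{Q_{\varepsilon\varepsilon}} \le M$ satisfies $\abs{Q_\varepsilon}^2 \le 2 M\,Q$ (apply nonnegativity of the Taylor quadratic in the increment and pass to its discriminant). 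Consequently $\bigl\lvert\partial_\varepsilon \sqrt{Q}\bigr\rvert = \abs{Q_\varepsilon}/(2\sqrt Q) \le \sqrt{M/2}$ wherever $Q > 0$, so that $\varepsilon \mapsto \sqrt{Q(\varepsilon,\bs)}$ is Lipschitz in $\varepsilon$ with a constant independent of $\bs$. Together with the smoothness of $\omega_\varepsilon$, this bounds the difference quotients $\varepsilon^{-1}\bigl(\sqrt{Q(\varepsilon,\cdot)}\,\omega_\varepsilon - \sqrt{Q(0,\cdot)}\bigr)$ uniformly on the finite-measure surface $\Gamma$.

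Finally I would apply the dominated convergence theorem to pass the limit inside, obtaining $\d J(\Gamma)[\bV] = \int_\Gamma \d g[\bV] + g\,\div_\Gamma\bV \,\ds$ exactly as in \eqref{eq:ShapeDerivSurfLocalMaterial}; since $\d q[\bV]$ and $\div_\Gamma\bV$ are linear and continuous in $\bV$, this expression is linear and continuous in $\bV$, which is the asserted shape differentiability. It is worth emphasizing where the null-set hypothesis enters: without it, the difference quotients would still be uniformly bounded and their one-sided limit would still exist, but on $N$ (where Glaeser forces $Q_\varepsilon(0,\bs) = 0$) that limit equals $\bigl(\tfrac12 Q_{\varepsilon\varepsilon}(0,\bs)\bigr)^{1/2}$, which is positively homogeneous yet \emph{nonlinear} in $\bV$ (the familiar obstruction of $\abs{\nabla u}$ at $\nabla u = \bnull$). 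The assumption that $N$ be null is precisely what removes this nonlinear contribution and upgrades directional to genuine shape differentiability.
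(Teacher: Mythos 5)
Your proof is correct, but it takes a genuinely different route from the paper's. The paper reduces the theorem to a citation: it invokes the criterion of Sokolowski and Zol\'esio (Sections~2.18 and 2.33) by which shape differentiability of a boundary integral follows once $g(\varepsilon,\cdot)\in L^1(\Gamma_\varepsilon)$, $\d g[\bV]\in L^1(\Gamma)$ and $g'[\bV]\in L^1(\Gamma)$ are verified; the material derivative is computed by the chain rule on $\{g\neq 0\}$ (which is a.e.\ by hypothesis), in exactly the form of your $\d q[\bV]/(2\sqrt q)$, and boundedness of that expression gives the $L^1$ memberships. You instead give a self-contained dominated-convergence argument: you isolate the smooth nonnegative quantity $q=\trace(\shape^2)$, pull it back to the reference surface, and use Glaeser's inequality $\abs{Q_\varepsilon}^2\le 2MQ$ to convert the uniform bound on $Q_{\varepsilon\varepsilon}$ into a uniform Lipschitz bound in $\varepsilon$ for $\sqrt Q$, which dominates the difference quotients on the compact surface. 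What this buys is an explicit justification of the interchange of limit and integral --- precisely the point where $\partial_q\sqrt q$ blows up and where the paper leans on the cited criterion --- together with a sharp explanation of where the null-set hypothesis enters (the nonlinear one-sided limit $(\tfrac12 Q_{\varepsilon\varepsilon})^{1/2}$ on the flat set). Two small points to tidy up: Glaeser's inequality in the stated form holds for functions on all of $\R$; on the bounded parameter interval $(-\varepsilon_0,\varepsilon_0)$ the discriminant argument must be supplemented by the boundary case of the minimization over the increment, which still yields $\abs{Q_\varepsilon}\le C\sqrt Q$ with $C$ depending on $M$, $\varepsilon_0$ and $\sup Q$ --- enough for the uniform Lipschitz bound. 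Also, passing from $\abs{\partial_\varepsilon\sqrt Q}\le L$ on the open set $\{Q>0\}$ to a global Lipschitz estimate for $\varepsilon\mapsto\sqrt{Q(\varepsilon,\bs)}$ uses that $\sqrt Q$ vanishes, hence attains its minimum, on $\{Q=0\}$; this is standard but deserves a sentence.
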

\begin{proof}
	Using \cite[Sec~2.18 and Sec~2.33]{SokolowskiZolesio1992}, shape differentiability of \eqref{eq:TV_of_normal} can be established for $\Gamma$ of class $C^1$ if the integrand of \eqref{eq:TV_of_normal}, i.e.,
	\begin{align}\label{eq:Integrand}
		g(\varepsilon, \bs_\varepsilon) \coloneqq \bigh(){k_{1,\varepsilon}^2(\bs_\varepsilon) + k_{2,\varepsilon}^2(\bs_\varepsilon)}^{1/2} = \bigh(){\absRiemannian{(D_\Gamma \bn_\varepsilon) \, \bxi_{1,\varepsilon}}^2 + \absRiemannian{(D_\Gamma \bn_\varepsilon) \, \bxi_{2,\varepsilon}}^2}^{1/2}
	\end{align}
	fulfills $g(\varepsilon, \cdot) \in L^1(\Gamma_\varepsilon)$. Likewise, the material and local derivatives have to satisfy $\d g[\bV] \in L^1(\Gamma)$ and $g'[\bV] \in L^1(\Gamma)$ for all sufficiently smooth vector fields $\bV$ with compact support in the hold-all~$\DD$.

	Since we consider $\Gamma$ to be a smooth surface and $\bV\colon \mathcal{D}\rightarrow\R^3$ a smooth vector field, $\Gamma_\varepsilon \coloneqq T_\varepsilon[\bV](\Gamma)$ is smooth.
	Moreover, since $\Gamma_\epsilon$ is compact, $g(\varepsilon, \cdot)$ is bounded, and one easily deduces $g(\varepsilon, \cdot) \in L^1(\Gamma_\varepsilon)$.

	The shape differentiability of the tangent basis is considered in \Cref{theorem:tangent_is_differentiable} and of the derivative of the normal in \Cref{theorem:tangent_Jacobi_normal_is_differentiable}. 
	Hence, we establish the material derivative of the expression under the square root in~\eqref{eq:Integrand} via the chain rule for a composition of smooth functions. 
	Notice that the local character of the results in \Cref{theorem:tangent_is_differentiable,theorem:tangent_Jacobi_normal_is_differentiable} is sufficient since \eqref{eq:Integrand} is independent of the choice of the orthonormal basis, as established in \Cref{remark:TV_of_normal_is_independent_of_basis_in_tangent_space_on_Gamma}.
	Since, by assumption, both principal curvatures do not vanish simultaneously, we have $g\neq0$ almost everywhere and we arrive at
	\begin{equation}
		\label{eq:IntegrandToBeConstant}
		\d g[\bV]
		=
		\frac{1}{g(\bs)} \sum_{i=1}^2 \bigRiemannian{(D_\Gamma \bn) \, \bxi_i}{\d [(D_\Gamma \bn) \, \bxi_i][\bV]}, 
	\end{equation}
	from where we conclude $\d g[\bV] \in L^1(\Gamma)$. 
	To establish $g'[\bV] \in L^1(\Gamma)$, we utilize~\eqref{eq:MaterialLocal}. 
	To this end, we extend $g$ constantly in normal direction into a tubular neighborhood of $\Gamma$. 
	As a composition of smooth functions, we conclude $g'[\bV] \in L^1(\Gamma)$.
\end{proof}

\begin{remark}\label{rem:WeakOnly}
	As per \cite[Eq.~(2.172)]{SokolowskiZolesio1992}, the requirement $g'[\bV] \in L^1(\Gamma)$ can be omitted if one is only interested in the representation~\eqref{eq:ShapeDerivSurfLocalMaterial} of the shape derivative and not in formulation~\eqref{eq:ShapeDerivSurfLocal}.
\end{remark}

We are now in the position to address the minimization of \eqref{eq:TV_of_normal}.
In view of \cref{lemma:behavior_of_tv_of_normal_under_scaling}, this is meaningful only when additional terms are present which prevent the degeneration of the surface to a point.
We choose to impose a constraint on the surface area here.
We have the following partial result.
\begin{theorem}
	\label{theorem:sphere_is_stationary}
	Spheres are stationary points for \eqref{eq:TV_of_normal} among all surfaces $\Gamma$ of constant area.
\end{theorem}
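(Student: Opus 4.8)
The plan is to verify constrained stationarity directly from the shape derivative formulas collected above, treating the area as the constraint. By \Cref{theorem:problem_is_differentiable} the functional \eqref{eq:TV_of_normal} is shape differentiable at a sphere of radius $R$, since there the principal curvatures $k_1 = k_2 = 1/R$ never vanish. Writing $J$ for \eqref{eq:TV_of_normal} and $A(\Gamma) = \int_\Gamma 1 \, \ds$ for the area, a sphere is stationary under the constraint $A(\Gamma) = \text{const}$ precisely when there is a Lagrange multiplier $\lambda \in \R$ with $\d J(\Gamma)[\bV] = \lambda \, \d A(\Gamma)[\bV]$ for every admissible $\bV$. Hence it is enough to show that both $\d J(\Gamma)[\bV]$ and $\d A(\Gamma)[\bV]$ are scalar multiples of $\int_\Gamma \bV^\top \bn \, \ds$ on the sphere; then $\d J$ vanishes on all area-preserving perturbations and the multiplier is read off by comparison.

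The area term is immediate: taking $g \equiv 1$ in \eqref{eq:ShapeDerivSurfLocalMaterial}, or \eqref{eq:TangentStokesVanilla} with $c = 1$, gives $\d A(\Gamma)[\bV] = \int_\Gamma \div_\Gamma \bV \, \ds = \int_\Gamma (k_1 + k_2) \, \bV^\top \bn \, \ds = \tfrac{2}{R} \int_\Gamma \bV^\top \bn \, \ds$. For $J$ I would start from \eqref{eq:ShapeDerivSurfLocalMaterial} with the integrand $g = (k_1^2 + k_2^2)^{1/2}$ furnished by \Cref{proposition:interpretation_of_integrand_tv_of_normal_in_terms_of_principal_curvatures}. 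On the sphere $g \equiv \sqrt{2}/R$ is constant, so the first contribution is simply $\int_\Gamma g \, \div_\Gamma \bV \, \ds = \tfrac{\sqrt2}{R}\,\tfrac{2}{R} \int_\Gamma \bV^\top \bn \, \ds$, already a multiple of $\int_\Gamma \bV^\top \bn \, \ds$.

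The substance of the proof is the material-derivative term $\int_\Gamma \d g[\bV] \, \ds$. Here I would insert \eqref{eq:IntegrandToBeConstant} and expand $\d[(D_\Gamma \bn)\,\bxi_i][\bV]$ using \eqref{eq:material_derivative_of_Dn_xi} together with \eqref{eq:material_derivative_of_the_normal}. The decisive simplification is that on the sphere the constantly extended Gauss map satisfies $D\bn = \tfrac1R(\id - \bn\bn^\top)$, so $(D_\Gamma\bn)\,\bxi_i = \tfrac1R\bxi_i$ and $g$ is constant. Pairing with $\bxi_i$ and summing over $i=1,2$, the three terms of \eqref{eq:material_derivative_of_Dn_xi} reduce to $\div_\Gamma(\d\bn[\bV]) - \tfrac1R \div_\Gamma \bV$, the contribution of $(D_\Gamma\bn)\,\d\bxi_i[\bV]$ dropping out because $\bxi_i^\top \d\bxi_i[\bV] = \tfrac12 \d(\bxi_i^\top\bxi_i)[\bV] = 0$ by unit length. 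Since $\d\bn[\bV]$ is tangential by \eqref{eq:dnV_is_tangential}, integrating $\div_\Gamma(\d\bn[\bV])$ over the closed surface vanishes by \eqref{eq:TangentStokesVanilla}, while $\int_\Gamma \div_\Gamma \bV \, \ds = \tfrac2R\int_\Gamma \bV^\top\bn\,\ds$ as before. Thus $\int_\Gamma \d g[\bV]\,\ds = -\tfrac{\sqrt2}{R^2}\int_\Gamma \bV^\top \bn \, \ds$, and adding the two contributions gives $\d J(\Gamma)[\bV] = \tfrac{\sqrt2}{R^2}\int_\Gamma \bV^\top\bn\,\ds = \lambda \, \d A(\Gamma)[\bV]$ with $\lambda = 1/(\sqrt2\, R)$, establishing stationarity.

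I expect the main obstacle to be the bookkeeping in the material-derivative term: tracking where the full derivative $D$ may legitimately be replaced by the tangential derivative $D_\Gamma$ (exploiting the constant normal extension $(D\bn)\bn = \bnull$), and confirming that every term not proportional to $\bV^\top \bn$ either integrates to zero over the closed surface or cancels pointwise. The sphere's isotropy $k_1 = k_2$ is exactly what makes these cancellations clean; on a general surface the same computation would leave curvature-weighted terms that need not collapse to a multiple of $\int_\Gamma \bV^\top\bn\,\ds$, which is why the statement is restricted to spheres.
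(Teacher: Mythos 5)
Your proposal is correct and follows essentially the same route as the paper: the same Lagrange-multiplier setup on the sphere, the same three-term decomposition of $\d\bigh(){(D_\Gamma\bn)\,\bxi_i}[\bV]$ via \eqref{eq:material_derivative_of_Dn_xi}, the first term killed by tangential Stokes and the tangentiality of $\d\bn[\bV]$, the third by orthonormality of the $\bxi_i$, and the second identified as $-\tfrac{\sqrt2}{r^2}\int_\Gamma \bV^\top\bn\,\ds$, yielding the same multiplier. The only (minor but pleasant) deviation is that you recognize the second term as $-\tfrac{1}{\sqrt2\,r}\int_\Gamma\div_\Gamma\bV\,\ds$ and integrate it with \eqref{eq:TangentStokesVanilla} directly, which bypasses \Cref{lemma:Stephan} and its restriction to normal perturbation fields $\bV$.
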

\begin{proof}
	We consider the minimization of \eqref{eq:TV_of_normal} or equivalently, \eqref{eq:TV_of_normal_with_principal_curvatures}, subject to the constraint that the surface area equals the constant $A_0 > 0$.
	The Lagrangian associated with this problem is given by
	\begin{equation*}
		\int_\Gamma \bigh(){k_1^2(\bs) + k_2^2(\bs)}^{1/2} \, \ds 
		+ 
		\mu \, \varh(){\int_\Gamma 1 \, \ds - A_0}.
	\end{equation*}
	Here $\mu \in \R$ is the Lagrange multiplier to be determined below.
	The differentiability of the first summand has been considered in~\Cref{theorem:problem_is_differentiable}. On the perturbed domain with surface $\Gamma_\varepsilon$ with the perturbation according to \eqref{eq:perturbation_of_identity}, the Lagrangian reads
	\begin{equation*}
		\begin{aligned}
			\LL(\varepsilon, \mu) 
			&
			\coloneqq
			\int_{\Gamma_\varepsilon} \bigh(){k_{1,\varepsilon}^2(\bs_\varepsilon) + k_{2,\varepsilon}^2(\bs_\varepsilon)}^{1/2} \, \ds_\varepsilon 
			+ 
			\mu \, \varh(){\int_{\Gamma_\varepsilon} 1 \, \ds_\varepsilon - A_0 }
			\\
			&
			=
			\int_{\Gamma_\varepsilon} \Bigh[]{\bigh(){k_{1,\varepsilon}^2(\bs_\varepsilon) + k_{2,\varepsilon}^2(\bs_\varepsilon)}^{1/2} + \mu} \, \ds_\varepsilon 
			- \mu \, A_0
			.
		\end{aligned}
	\end{equation*}
	We use the same abbreviation as before in~\eqref{eq:Integrand}.
	The above integral is of type \eqref{eq:SurfaceIntegralEps} and its shape derivative at the unperturbed surface, according to \eqref{eq:ShapeDerivSurfLocal}, is given by 
	\begin{equation*}
		\d \LL(0, \mu)[\bV] 
		=
		\int_\Gamma \bV^\top \bn \bigh[]{ (Dg) \bn + (k_1 + k_2) \, (g + \mu) } + g'[\bV] \, \ds
	\end{equation*}
	because $\mu$ is a constant.

	When $\Omega$ is a sphere of radius~$r$, we are going to show that $\d \LL(0, \mu)[\bV] = 0$ holds for all perturbation fields $\bV$ in normal direction and with $\mu = - 1/(\sqrt{2} \, r)$.
	In this setting, the principal curvatures are $k_1(\bs) = k_2(\bs) \equiv 1/r$; see for instance \cite[Chapter~13]{GrayAbbenaSalamon2006}.
	Consequently, $g(\bs) = \varh(){k_1^2(\bs)+k_2^2(\bs)}^{\frac{1}{2}} \equiv \sqrt{2}/r$ is spatially constant and thus $Dg \equiv 0$ holds.
	We obtain from \eqref{eq:ShapeDerivSurfLocal}
	\begin{equation*}
		\d \LL(0, \mu)[\bV] 
		=
		\int_\Gamma \bV^\top \bn \, \frac{2}{r} \Bigh(){ \frac{\sqrt{2}}{r} + \mu }  \, \ds
		+
		\int_\Gamma g'[\bV] \, \ds
		.
	\end{equation*}
	Hence, by~\eqref{eq:MaterialLocal}, we also have $\d g[\bV] = g'[\bV]$. Using~\eqref{eq:IntegrandToBeConstant}, we arrive at
	\begin{equation}
		g'[\bV]
		=
		\d g[\bV]
		=
		\frac{1}{g(\bs)} \sum_{i=1}^2 \bigRiemannian{(D_\Gamma \bn) \, \bxi_i}{\d [(D_\Gamma \bn) \, \bxi_i][\bV]}.
	\end{equation}
	In order to complete the proof of \Cref{theorem:sphere_is_stationary}, we need to show that
	\begin{equation}
		\label{eq:sphere_is_stationary_2}
		\int_\Gamma \frac{1}{g(\bs)} \sum_{i=1}^2 \bigRiemannian{(D_\Gamma \bn) \, \bxi_i}{\d [(D_\Gamma \bn) \, \bxi_i][\bV]} \, \ds
		= 
		c_0 \int_\Gamma \bV^\top \bn \, \ds 
	\end{equation}
	holds with $c_0 = - \frac{\sqrt{2}}{r^2}$. To this end, we need a tangential Stokes formula as given in \Cref{lemma:Stephan}.

	We shall also utilize that $g(\bs) = \sqrt{2}/r$ is a constant on the sphere of radius~$r$.
	Finally, we utilize
	\begin{equation}
		\label{eq:Dn_and_DGamman_on_spheres}
		(D\bn)(\bs) 
		\equiv 
		\frac{\id}{r}
		\quad
		\text{and}
		\quad
		(D_\Gamma \bn)(\bs) 
		= 
		\frac{\id}{r}\bigh(){\id - \bn \bn^\top}
	\end{equation}
	and thus $(D_\Gamma \bn) \, \bxi = \bxi/r$ holds for $i = 1,2$.

	The three terms contributing to the material derivative $\d [(D_\Gamma \bn) \, \bxi_i][\bV]$ in \eqref{eq:sphere_is_stationary_2} are given in \eqref{eq:material_derivative_of_Dn_xi} and we consider them individually.
	We utilize that the Riemannian metric on $\sphere{2}$ is the Euclidean inner product of the ambient $\R^3$, i.e., $\Riemannian{\ba}{\bb} = \ba^\top \bb$.

	\textbf{First Term.}
	The insertion of the first term in \eqref{eq:material_derivative_of_Dn_xi} into the left hand side of \eqref{eq:sphere_is_stationary_2} leads to the expression
	\begin{align}
		\MoveEqLeft
		\frac{r}{\sqrt{2}} \int_\Gamma \sum_{i=1}^2 \bigh[]{(D_\Gamma \bn) \, \bxi_i}^\top {D_\Gamma (\d \bn[\bV]) \, \bxi_i} \, \ds
		\notag
		\\
		&
		=
		\frac{r}{\sqrt{2}}\frac{1}{r} \int_\Gamma \sum_{i=1}^2 \bxi_i^\top {D_\Gamma (\d \bn[\bV]) \, \bxi_i} \, \ds 
		\quad \text{by \eqref{eq:Dn_and_DGamman_on_spheres}}
		\notag
		\\
		&
		=
		\frac{1}{\sqrt{2}} \int_\Gamma \div_\Gamma \d \bn[\bV] \, \ds
		=
		0
		.
		\label{eq:sphere_is_stationary_term_1}
	\end{align}
	The last step follows from \eqref{eq:TangentStokesVanilla} with $c = 1$.
	Recall from \eqref{eq:dnV_is_tangential} that $\d \bn[\bV]$ is tangential.

	\textbf{Second Term.}
	Inserting the second term in \eqref{eq:material_derivative_of_Dn_xi} into the left hand side of \eqref{eq:sphere_is_stationary_2} leads to the expression
	\begin{align}
		\MoveEqLeft
		- \int_\Gamma \frac{1}{g(\bs)} \sum_{i=1}^2 \bigh[]{(D_\Gamma \bn) \, \bxi_i}^\top {(D\bn) (D\bV) \, \bxi_i} \, \ds
		\notag
		\\
		&
		=
		- \frac{r}{\sqrt{2}} \frac{1}{r^2} \int_\Gamma  \sum_{i=1}^2 \bxi_i^\top (D\bV) \, \bxi_i \, \ds 
		\quad
		\text{by \eqref{eq:Dn_and_DGamman_on_spheres}}
		\notag
		\\
		&
		=
		- \frac{1}{\sqrt{2} \, r} \int_\Gamma  \sum_{i=1}^2 \bV^\top \bn \bigh[]{\bxi_i^\top (D_\Gamma \bn) \, \bxi_i} \, \ds 
		\quad
		\text{by \eqref{eq:Stephan}}
		\notag
		\\
		&
		=
		- \frac{\sqrt{2}}{r^2} \int_\Gamma \bV^\top \bn \, \ds 
		\quad
		\text{by \eqref{eq:Dn_and_DGamman_on_spheres}}
		.
		\label{eq:sphere_is_stationary_term_2}
	\end{align}

	\textbf{Third Term.}
	Finally, inserting the third term in \eqref{eq:material_derivative_of_Dn_xi} into the left hand side of \eqref{eq:sphere_is_stationary_2} yields
	\begin{equation}
		\label{eq:sphere_is_stationary_term_4}
		\int_\Gamma \frac{1}{g(\bs)} \sum_{i=1}^2 \bigh[]{(D\bn) \, \bxi_i}^\top {(D\bn) \, (\d \bxi_i[\bV])} \, \ds.
	\end{equation}
	The first summand ($i = 1$) leads to 
	\begin{equation}
		\begin{aligned}
			\label{eq:sphere_is_stationary_term_3a}  
			\MoveEqLeft
			\int_\Gamma \frac{1}{g(\bs)} \bigh[]{(D_\Gamma \bn) \, \bxi_1}^\top {(D\bn) \, (\d \bxi_1[\bV])} \, \ds
			\notag
			\\
			&
			=
			\frac{r}{\sqrt{2}} \int_\Gamma \bigh[]{(D_\Gamma \bn) \, \bxi_1}^\top (D\bn) \Bigh[]{(D\bV) \, \bxi_1 - (\bxi_1^\top (D\bV) \, \bxi_1) \, \bxi_1} \, \ds
			& & 
			\text{by \eqref{eq:material_derivatives_of_tangent_basis}}
			\\
			& 
			=
			\frac{r}{\sqrt{2}} \frac{1}{r^2} \int_\Gamma \bxi_1^\top \Bigh[]{(D\bV)\, \bxi_1 - (\bxi_1^\top (D\bV) \, \bxi_1) \, \bxi_1} \, \ds 
			\\
			&
			= 
			0.
		\end{aligned}
	\end{equation}
	For the second summand ($i = 2$), we get one additional term:
	\begin{equation}
		\begin{aligned}
			\label{eq:sphere_is_stationary_term_3b}  
			\MoveEqLeft
			\int_\Gamma \frac{1}{g(\bs)} \bigh[]{(D_\Gamma \bn) \, \bxi_2}^\top {(D\bn) \, (\d \bxi_2[\bV])} \, \ds
			\notag
			\\
			&
			=
			\frac{r}{\sqrt{2}} \int_\Gamma \bigh[]{(D_\Gamma \bn) \, \bxi_2}^\top (D\bn) \Bigh[]{(D\bV) \, \bxi_2 - (\bxi_2^\top (D\bV) \, \bxi_2) \, \bxi_2} \, \ds
			\\
			&
			\qquad
			-
			\frac{r}{\sqrt{2}} \int_\Gamma \bigh[]{(D_\Gamma \bn) \, \bxi_2}^\top {(D\bn) \, (\bxi_1^\top(D\bV+D\bV^\top) \, \bxi_2) \, \bxi_1}
			\quad
			\text{by \eqref{eq:material_derivatives_of_tangent_basis}}
			\\
			& 
			=
			0
			-
			\frac{r}{\sqrt{2}} \frac{1}{r^2} \int_\Gamma \bxi_2^\top \bigh[]{\bxi_1^\top(D\bV+D\bV^\top) \, \bxi_2} \, \bxi_1 \, \ds 
			\\
			&
			= 
			0.
		\end{aligned}
	\end{equation}
	Hence expression \eqref{eq:sphere_is_stationary_term_4} is zero.
	Collecting terms \eqref{eq:sphere_is_stationary_term_1}--\eqref{eq:sphere_is_stationary_term_4}, we have shown that the left hand side in \eqref{eq:sphere_is_stationary_2} amounts to
	\begin{equation*}
		\int_\Gamma \frac{1}{g(\bs)} \sum_{i=1}^2 \bigRiemannian{(D_\Gamma \bn) \, \bxi_i}{\d [(D_\Gamma \bn) \, \bxi_i][\bV]} \, \ds
		=
		- \frac{\sqrt{2}}{r^2} \int_\Gamma  \bV^\top \bn \, \ds 
		.
	\end{equation*}
	Therefore, \eqref{eq:sphere_is_stationary_2} is fulfilled with 
	\begin{equation*}
		c_0
		=
		- \frac{\sqrt{2}}{r^2}
		.
	\end{equation*}

	As a consequence of \eqref{eq:sphere_is_stationary_2} we obtain the representation
	\begin{equation*}
		\d \LL(0, \mu)[\bV] 
		=
		\Bigh[]{\frac{2}{r} \Bigh(){ \frac{\sqrt{2}}{r} + \mu } + c_0} \int_\Gamma \bV^\top \bn \, \ds
		=
		\Bigh[]{\frac{2}{r} \Bigh(){ \frac{1}{\sqrt{2} \, r} + \mu }} \int_\Gamma \bV^\top \bn \, \ds
	\end{equation*}
	for all perturbation fields $\bV$ parallel to $\bn$.
	Clearly, the term in brackets vanishes when $\mu = - 1/(\sqrt{2} \, r)$ holds.
	This concludes the proof.
\end{proof}

\begin{remark} \hfill
	\label{remark:sphere_is_stationary}
	\begin{enumerate}
		\item 
			A numerical study shows that among all ellipsoids of equal area, the sphere is indeed the unique minimizer of \eqref{eq:geometric_inverse_problem_with_TV_of_normal}.
		\item 
			The proof utilizes the isotropic nature of \eqref{eq:TV_of_normal}.
			It continues to hold when the surface area constraint is replaced by a volume constraint, albeit with the different value $\mu = -\sqrt{2}/r^2$.
	\end{enumerate}
\end{remark}

\subsection{The Case of Curves}
\label{subsec:curves}

When $\Omega \subset \R^2$ and $\Gamma$ is a one-dimensional manifold, \eqref{eq:TV_of_normal} and thus \eqref{eq:TV_of_normal_with_principal_curvatures} reduce to the \emph{total absolute curvature} $\int_\Gamma \abs{k} \, \ds$, where $k$ is the single curvature.
It is well known that this integral has a minimal value of $2 \pi$, which is attained precisely for the boundaries $\Gamma$ of convex, smoothly bounded domains $\Omega \subset \R^2$; see \cite[Chapter~21.1]{Chen2000} or \cite{BrookBrucksteinKimmer2005}.
This case is thus different in two aspects from our setting $\Omega \subset \R^3$.
On the one hand, $\int_\Gamma \abs{k} \, \ds$ is invariant to scale while \eqref{eq:TV_of_normal} is not, as was shown in \eqref{eq:behavior_of_tv_of_normal_under_scaling}.
On the other hand, \eqref{eq:TV_of_normal} appears to have a much smaller set of minimizers; see \Cref{theorem:sphere_is_stationary}.

\section{Split Bregman Iteration}
\label{sec:split_Bregman}

In this section we propose an Alternating Direction Method of Multipliers (ADMM) iteration, which generalizes the split Bregman algorithm for total variation problems proposed in \cite{GoldsteinOsher2009}.
As is well known, ADMM methods introduce a splitting of variables so that minimization over individual variables becomes efficient.

There is very little prior work on ADMM involving manifolds.
We are aware of \cite{LaiOsher2014,KovnatskyGlashoffBronstein2016,ZhangMaZhang2017_preprint,WangYinZeng2018}, all of which utilize particular manifolds and their embeddings into some vector space in order to formulate the splitting constraint.
By contrast, in our setting the constraint is formulated pointwise in the tangent bundle of $\sphere{2}$.
In addition, and even though we do not emphasize this aspect throughout the paper, the primary variable~$\Omega$ in problem \eqref{eq:geometric_inverse_problem_with_TV_of_normal_split} lives on a \emph{manifold of shapes}.
That said, we will not attempt a convergence proof for the proposed split Bregman iteration here but leave it for future research.

In our setting, the primary variable is the unknown domain $\Omega$.
Notice that $\Omega$ also determines its boundary $\Gamma$ as well as its normal vector field $\bn$, and we will always consider $\Gamma$ and $\bn$ as a function of $\Omega$.
The splitting is achieved through the introduction of a new variable $\bd$, which is independent of $\Omega$, $\Gamma$ and $\bn$.
At the solution, we require the coupling condition $\bd = D_\Gamma \bn$ to hold across $\Gamma$.
We recall that $D_\Gamma \bn$ denotes the derivative (push-forward) of $\bn$.
At the point $\bs \in \Gamma$, $(D_\Gamma \bn)(\bs)$ maps $\tangent{\bs}{\Gamma}$ into $\tangent{\bn(\bs)}{\sphere{2}}$.

Written in terms of $\Omega$ and the secondary variable $\bd = (\bd_1, \bd_2) \colon \Gamma \to \tangent{\bn(\cdot)}{\sphere{2}}$, problem
\eqref{eq:geometric_inverse_problem_with_TV_of_normal} becomes
\begin{equation}
	\label{eq:geometric_inverse_problem_with_TV_of_normal_split}
	\begin{aligned}
		\text{Minimize} \quad & \ell(u(\Omega),\Omega) + \beta \, \int_\Gamma \bigh(){\absRiemannian{\bd_1}^2 + \absRiemannian{ \bd_2}^2}^{1/2} \, \ds \\
		\text{s.t.} \quad & \bd_i = (D_\Gamma \bn) \, \bxi_i \quad \text{on } \Gamma \text{ for } i = 1,2.
	\end{aligned}
\end{equation}
Notice that for convenience of notation, we represent $D_\Gamma \bn$ in terms of its actions on the two basis vectors $\bxi_i$.

Note also that at the point $\bs \in \Gamma$, the equality $\bd_i = (D_\Gamma \bn) \, \bxi_i$ is in the tangent space $\tangent{\bn(\bs)}{\sphere{2}}$.
We therefore introduce Lagrange multipliers $\blambda = (\blambda_1, \blambda_2)$, belonging to the same space, and define the augmented Lagrangian associated with \eqref{eq:geometric_inverse_problem_with_TV_of_normal} as follows,
\begin{multline}
	\label{eq:Augmented_Lagrangian}
	\widehat
	\LL(\Omega,\bd_1,\bd_2,\blambda_1,\blambda_2)
	\coloneqq
	\ell(u(\Omega),\Omega)
	+
	\beta\int_\Gamma \bigh(){\absRiemannian{\bd_1}^2 + \absRiemannian{ \bd_2}^2}^{1/2} \, \ds
	\\
	+
	\sum_{i=1}^2 \left(\int_\Gamma \bigRiemannian {\blambda_i}{\bd_i - (D_\Gamma \bn) \, \bxi_i} \, \ds
		+
	\frac{\lambda}{2} \int_\Gamma \bigRiemannian{\bd_i - (D_\Gamma \bn) \, \bxi_i}{\bd_i - (D_\Gamma \bn) \, \bxi_i} \, \ds\right)
	.
\end{multline}
After the usual re-scaling $\bb_i \coloneqq \blambda_i/\lambda$, we can rewrite \eqref{eq:Augmented_Lagrangian} as
\begin{multline}
	\label{eq:Augmented_Lagrangian_rescaled}
	\LL(\Omega,\bd_1,\bd_2,\bb_1,\bb_2)
	\coloneqq
	\ell(u(\Omega),\Omega)
	+
	\beta \int_\Gamma \bigh(){\absRiemannian{\bd_1}^2 + \absRiemannian{ \bd_2}^2}^{1/2} \, \ds
	\\
	+
	\frac{\lambda}{2}\sum_{i=1}^2 \int_\Gamma \bigabsRiemannian{\bd_i - (D_\Gamma \bn) \, \bxi_i - \bb_i}^2 \, \ds
	.
\end{multline}
The main difference to an ADMM method in Euclidean or Hilbert spaces is that the vector fields $\bd_i$ and $\bb_i$ have values in the tangent space $\tangent{\bn(\cdot)}{\sphere{2}}$. 
Hence  they must be updated whenever $\Omega$ and thus the normal vector field $\bn$ are changing.

We outline our proposed method for \eqref{eq:geometric_inverse_problem_with_TV_of_normal_split} as \Cref{alg:Split_Bregman}.
As expected for methods of the ADMM class, we successively optimize with respect to the variables $\Omega$ and $(\bd_1,\bd_2)$ independently and then perform a simple update step for the multiplier $(\bb_1,\bb_2)$.
We address each of these steps in the following subsections.

\begin{algorithm}{Split Bregman method for \eqref{eq:geometric_inverse_problem_with_TV_of_normal_split}}
	\begin{algorithmic}[1]
		\REQUIRE Initial domain $\Omega^{(0)}$ 
		\ENSURE Approximate solution of \eqref{eq:geometric_inverse_problem_with_TV_of_normal_split}
		\STATE Set $\bb^{(0)} \coloneqq \bnull$, $\bd^{(0)} \coloneqq \bnull$
		\STATE Set $k \coloneqq 0$
		\WHILE{not converged}
		\STATE
			Perform some gradient steps for $\Omega \mapsto \LL(\Omega,\bd^{(k)},\bb^{(k)}) $ starting from $\Omega^{(k)}$, to obtain $\Omega^{(k+1)}$
		\STATE
			Parallely transport the multiplier estimate $\bb^{(k)}$ pointwise from $\tangent{\bn^{(k)}(\cdot)}{\sphere{2}}$ to $\tangent{\bn^{(k+1)}(\cdot)}{\sphere{2}}$ along the geodesic from $\bn^{(k)}$ to $\bn^{(k+1)}$
		\STATE
			Parallely transport the basis vectors $\bxi_i$ pointwise from $\tangent{\bn^{(k)}(\cdot)}{\sphere{2}}$ to $\tangent{\bn^{(k+1)}(\cdot)}{\sphere{2}}$ along the geodesic from $\bn^{(k)}$ to $\bn^{(k+1)}$ for $i = 1,2$
		\STATE
			Set $\bd^{(k+1)} \coloneqq \argmin \LL(\Omega^{(k+1)},\bd^{(k)},\bb^{(k)})$, see \eqref{eq:solution_of_d-problem}
		\STATE 
			Update the Lagrange multipliers, i.e., set $\bb_i^{(k+1)} \coloneqq \bb_i^{(k)} + (D_\Gamma \bn^{(k+1)}) \, \bxi_i - \bd_i^{(k+1)}$ for $i = 1,2$
		\STATE Set $k \coloneqq k+1$
		\ENDWHILE
	\end{algorithmic}
	\label{alg:Split_Bregman}
\end{algorithm}

\subsection{The Shape Optimization Step}
\label{subsec:continuous_setting_shape_optimization_step}

We first address the minimization of \eqref{eq:Augmented_Lagrangian_rescaled} w.r.t.\ the shape $\alert{\Omega}$, while the quantities $\bd_1, \bd_2, \bb_1,\bb_2$ are fixed, or, more precisely, passively transformed along with $\Omega$. 
The main effort is to calculate the shape derivative of \eqref{eq:Augmented_Lagrangian_rescaled}.

The derivative of the first term in \eqref{eq:Augmented_Lagrangian_rescaled}, i.e., $\d \ell(u(\alert{\Omega}),\alert{\Omega})[\bV]$, is not specified here since it depends on the particular PDE underlying the solution operator $u(\Omega)$ and the loss function $\ell$ considered.
This derivative can be obtained by standard shape calculus techniques, which are not our concern here.
A concrete example is considered in the companion paper \cite{BergmannHerrmannHerzogSchmidtVidalNunez2019:2_preprint}.

Next we consider the second term in \eqref{eq:Augmented_Lagrangian_rescaled}.
Due to the chosen splitting, the vector fields $\bd_i$ are merely transformed along with $\Omega$ according to the perturbation \eqref{eq:perturbation_of_identity} and thus we define their perturbed counterparts as
\begin{equation}
	\label{eq:perturbation_of_d}
	\bd_{i,\varepsilon}(s_\varepsilon) \coloneqq \bd_i(\bT_\varepsilon^{-1}(s_\varepsilon)) = \bd_i(s)
\end{equation}
and likewise for $\bb_i$ and $\bigh(){\absRiemannian{\bd_1}^2 + \absRiemannian{ \bd_2}^2}^{1/2}$. As a consequence, their material derivatives vanish and the directional derivative of the second term of~\eqref{eq:Augmented_Lagrangian_rescaled} becomes 
\begin{equation*}
	\d \varh(){\int_{\alert{\Gamma}} \bigh(){\absRiemannian{\bd_1}^2 + \absRiemannian{ \bd_2}^2}^{1/2} \, \alert{\ds}} [\bV]
	=
	\int_\Gamma (\div_\Gamma \bV) \bigh(){\absRiemannian{\bd_1}^2 + \absRiemannian{ \bd_2}^2}^{1/2} \, \ds
	.
\end{equation*}

Finally we address the terms $\displaystyle \int_{\alert{\Gamma}} \bigabsRiemannian{\bd_i - (\alert{D_\Gamma \bn}) \, \alert{\bxi_i} - \bb_i}^2 \, \alert{\ds}$, $i = 1, 2$.
The vector fields $\bd_i$ and $\bb_i$ are transformed according to \eqref{eq:perturbation_of_d} and thus we need not consider their material derivatives.
However, we do need to track the dependencies of $(D_\Gamma \bn) \, \bxi_i$.
The respective shape derivative is given in \eqref{eq:material_derivative_of_Dn_xi}.

We summarize these findings in the following theorem.
\begin{theorem}
	\label{theorem:derivative_of_Augmented_Lagrangian_rescaled}
	Suppose that the $(\bd_1,\bd_2)$ do not vanish simultaneously on $\Gamma$ except possibly on a set of measure zero, and that the loss term $\ell(u(\Omega),\Omega)$ is shape differentiable.
	Then the augmented Lagrangian~\eqref{eq:Augmented_Lagrangian_rescaled} is shape differentiable and its shape derivative is given by
	\begin{align}
		\label{eq:derivative_of_Augmented_Lagrangian_rescaled}
		\MoveEqLeft
		\d \LL(\Omega,\bd_1,\bd_2,\bb_1,\bb_2)[\bV]
		\notag
		\\
		& 
		=
		\d \ell(u(\Omega),\Omega)[\bV]
		+ 
		\beta \int_\Gamma (\div_\Gamma \bV) \bigh(){\absRiemannian{\bd_1}^2 + \absRiemannian{ \bd_2}^2}^{1/2} \, \ds
		\notag
		\\
		&
		\quad
		+
		\frac{\lambda}{2}\sum_{i=1}^2 \int_\Gamma (\div_\Gamma \bV) \, \bigabsRiemannian{\bd_i - (D_\Gamma \bn) \, \bxi_i - \bb_i}^2 \, \ds
		\notag
		\\
		&
		\quad
		+
		\lambda \sum_{i=1}^2 \int_\Gamma \BigRiemannian{\bd_i - (D_\Gamma \bn) \, \bxi_i - \bb_i}{-\d \bigh(){(D_\Gamma \bn) \, \bxi_i}[\bV]} \, \ds
	\end{align}
	with $\d \bigh(){(D_\Gamma \bn) \, \bxi_i}[\bV]$ from \eqref{eq:material_derivative_of_Dn_xi}.
\end{theorem}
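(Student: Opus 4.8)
The plan is to exploit the linearity of the shape derivative and split $\LL$ from \eqref{eq:Augmented_Lagrangian_rescaled} into its three constituent pieces: the loss term $\ell(u(\Omega),\Omega)$, the total variation term weighted by $\beta$, and the quadratic penalty terms weighted by $\lambda/2$. Each is differentiated separately. The loss term contributes $\d \ell(u(\Omega),\Omega)[\bV]$ directly, by the standing hypothesis that it is shape differentiable, so no further work is needed there.

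For the two surface integrals I would apply the material-derivative form of the shape derivative \eqref{eq:ShapeDerivSurfLocalMaterial}, which for $\int_{\Gamma_\varepsilon} g(\varepsilon,\bs_\varepsilon)\,\ds_\varepsilon$ produces $\int_\Gamma g\,\div_\Gamma\bV + \d g[\bV]\,\ds$. The decisive simplification is that, by the passive-transport convention \eqref{eq:perturbation_of_d}, the fields $\bd_i$ and $\bb_i$ are merely pushed forward with $\Omega$ and hence have vanishing material derivative; the same holds for the whole $\beta$-integrand $\bigh(){\absRiemannian{\bd_1}^2+\absRiemannian{\bd_2}^2}^{1/2}$. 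Consequently the $\beta$-term contributes only its divergence part. For each penalty term I set $g=\tfrac{\lambda}{2}\absRiemannian{\bd_i-(D_\Gamma\bn)\,\bxi_i-\bb_i}^2$; its divergence contribution is immediate, and its material derivative follows from the chain rule for the squared norm. Since the Riemannian metric on $\sphere{2}$ is the ambient Euclidean inner product, one has $\absRiemannian{\bw}^2=\bw^\top\bw$ and $\d\absRiemannian{\bw}^2[\bV]=2\,\Riemannian{\bw}{\d\bw[\bV]}$; because $\d\bd_i[\bV]=\d\bb_i[\bV]=\bnull$, only the surviving term $-\d\bigh(){(D_\Gamma\bn)\,\bxi_i}[\bV]$ from \eqref{eq:material_derivative_of_Dn_xi} remains. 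Collecting the divergence and material contributions over $i=1,2$ reproduces exactly \eqref{eq:derivative_of_Augmented_Lagrangian_rescaled}.

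The step demanding genuine care — and where the hypotheses of the theorem are spent — is the justification that \eqref{eq:ShapeDerivSurfLocalMaterial} applies at all, i.e., that each integrand together with its material derivative lies in $L^1(\Gamma)$, exactly as in \Cref{theorem:problem_is_differentiable}. The quadratic penalty integrands are smooth compositions and cause no trouble on the smooth compact surface $\Gamma$. The delicate point is the $\beta$-term, whose integrand involves a square root that fails to be differentiable where $(\bd_1,\bd_2)$ vanishes; this is precisely why the hypothesis that $(\bd_1,\bd_2)$ do not vanish simultaneously except on a null set is imposed. Under that assumption the integrand is differentiable almost everywhere, and since its material derivative is zero by construction, this non-smoothness affects only the \emph{existence} of the shape derivative and not its value, so the $\beta$-term still enters solely through $\div_\Gamma\bV$. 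I expect this integrability and almost-everywhere-differentiability bookkeeping, rather than any algebra, to be the main obstacle, and it can be dispatched by repeating the argument of \Cref{theorem:problem_is_differentiable} verbatim.
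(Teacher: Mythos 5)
Your proposal is correct and follows essentially the same route as the paper: the paper's argument (given in the discussion preceding the theorem) likewise splits $\LL$ into the loss, TV, and penalty terms, invokes the passive-transport convention \eqref{eq:perturbation_of_d} so that the material derivatives of $\bd_i$, $\bb_i$, and the $\beta$-integrand vanish, and then applies \eqref{eq:ShapeDerivSurfLocalMaterial} together with the chain rule and \eqref{eq:material_derivative_of_Dn_xi} for the penalty terms. Your additional care about the $L^1$ bookkeeping and the role of the non-vanishing hypothesis (via the argument of \Cref{theorem:problem_is_differentiable}) is consistent with, and slightly more explicit than, what the paper records.
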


The shape derivative in \eqref{eq:derivative_of_Augmented_Lagrangian_rescaled} is the basis of any shape optimization procedure.
After all, the minimization of \eqref{eq:Augmented_Lagrangian_rescaled} w.r.t.\ the domain $\Omega$ represents a fairly standard shape optimization problem.
We convert the shape derivative \eqref{eq:derivative_of_Augmented_Lagrangian_rescaled} into a shape gradient vector field $\bU$ by means of an appropriate inner product.
Then, instead of minimizing \eqref{eq:Augmented_Lagrangian_rescaled} w.r.t.\ $\Omega$ to a certain accuracy, in practice we only perform a few gradient steps per ADMM iteration. 
This is in line with \cite{GoldsteinOsher2009}, where a Gauss--Seidel sweep is proposed instead of an exact solve.

Still, the terms in \eqref{eq:material_derivative_of_Dn_xi} would be tedious to implement by hand.
In our implementation, which is detailed in the companion paper \cite{BergmannHerrmannHerzogSchmidtVidalNunez2019:2_preprint}, the shape derivative \eqref{eq:derivative_of_Augmented_Lagrangian_rescaled} is conveniently evaluated by algorithmic differentiation techniques on the discrete level.

\subsection{The Total Variation Minimization Step}
\label{subsec:continuous_setting_normal_vector_step}

Before addressing the minimization of \eqref{eq:Augmented_Lagrangian_rescaled} w.r.t.\ $\bd$ we must note that the data $\bb_i$ at any point $\bs \in \Gamma$ has to belong to the tangent space $\tangent{\bn(\bs)}{\sphere{2}}$.
Since the surface $\Gamma$ and hence the field of normal vectors is changing during the shape optimization step, we must first move the data $\bb_i$ into the new tangent space.
This is achieved via parallel transport along geodesics on $\sphere{2}$.
Suppose for brevity of notation that $\bn^-$ denotes the old normal vector field along the boundary $\Gamma^-$ of the previous iterate $\Omega^-$.
Then $\bb_i^- \in \tangent{\bn^-(\cdot)}{\sphere{2}}$ needs to be parallely transported into $\bb_i \in \tangent{\bn(\cdot)}{\sphere{2}}$ along the geodesic from $\bn^-$ to $\bn$.
This step is inexpensive since geodesics and parallel transport on $\sphere{2}$ are available in terms of explicit formulas; see \Cref{sec:sphere_as_a_Riemannian_manifold}.
Since we explicitly refer to them, also the basis vectors $\bxi_i^-$ need to be parallely transported into $\bxi_i$ in the same way as above.

We can now address the minimization of \eqref{eq:Augmented_Lagrangian_rescaled} w.r.t.\ the field $\bd = (\alert{\bd_1},\alert{\bd_2})$.
Since the first term in \eqref{eq:Augmented_Lagrangian_rescaled} does not depend on $\bd$, we are left with the minimization of
\begin{equation}
	\label{eq:objective_d-problem}
	\beta \, \int_\Gamma \bigh(){\absRiemannian{\alert{\bd_1}}^2 + \absRiemannian{ \alert{\bd_2}}^2}^{1/2} \, \ds
	+
	\frac{\lambda}{2}\sum_{i=1}^2 \int_\Gamma \bigabsRiemannian{\alert{\bd_i} - (D_\Gamma \bn) \, \bxi_i - \bb_i}^2 \, \ds,
\end{equation}
where $\alert{\bd_1}, \alert{\bd_2}$ are sought pointwise in the tangent spaces $\tangent{\bn(\cdot)}{\sphere{2}}$.
We recall that the latter are two-dimensional subspaces of $\R^3$.
At this point it is important to note that the data $(D_\Gamma \bn) \, \bxi_i + \bb_i$ belongs to the same tangent spaces.
Therefore, just like in the Euclidean setting, the minimizer of \eqref{eq:objective_d-problem} can be evaluated explicitly and inexpensively via a pointwise, vectorial shrinkage operation, i.e.,
\begin{equation}
	\label{eq:solution_of_d-problem}
	\alert{\bd}
	=
	\begin{pmatrix}
		\alert{\bd_1} \\ \alert{\bd_2}
	\end{pmatrix}
	\coloneqq
	\max \varh\{\}{ \bigabsRiemannian{(D_\Gamma \bn) \, \bxi + \bb} - \frac{\beta}{\lambda}, \; 0 }
	\,
	\frac{(D_\Gamma \bn) \, \bxi + \bb}{\bigabsRiemannian{(D_\Gamma \bn) \, \bxi + \bb}}
	\in
	\varh[]{\tangent{\bn(\cdot)}{\sphere{2}}}^2
	.
\end{equation}
Here we abbreviated 
\begin{equation*}
	(D_\Gamma \bn) \, \bxi
	\coloneqq 
	\begin{pmatrix}
		(D_\Gamma \bn) \, \bxi_1
		\\ 
		(D_\Gamma \bn) \, \bxi_2
	\end{pmatrix}
	\quad
	\text{and}
	\quad
	\bigabsRiemannian{(D_\Gamma \bn) \, \bxi + \bb}
	=
	\Bigh(){\absRiemannian{(D_\Gamma \bn) \, \bxi_1}^2 + \absRiemannian{(D_\Gamma \bn) \, \bxi_2}^2}^{1/2}
	.
\end{equation*}

\subsection{The Multiplier Update}
\label{subsec:continuous_setting_multiplier_update}

An update of the Lagrange multiplier fields $(\alert{\bb_1},\alert{\bb_2})$ is achieved, analogously to the Euclidean setting, by replacing $\alert{\bb_i}$ with
\begin{equation*}
	\bb_i + (D_\Gamma \bn) \, \bxi_i - \bd_i,
	\quad i = 1,2.
\end{equation*}
Notice again that all quantities belong to the subspace $\tangent{\bn(\cdot)}{\sphere{2}}$ of $\R^3$.

\section{Conclusions and Outlook on the Discrete Setting}
\label{sec:conclusions}

In this paper we introduced an analogue of the total variation prior for the normal vector field \eqref{eq:TV_of_normal} defined on the boundary $\Gamma$ of smooth domains $\Omega \subset \R^3$.
This functional is also known as the total root mean square curvature \eqref{eq:TV_of_normal_with_principal_curvatures}.
We have shown in \cref{theorem:sphere_is_stationary} that it admits spheres as stationary points under an area constraint and we conjecture that spheres are in fact global minimizers.

We proposed a split Bregman (ADMM) scheme for the numerical solution of shape optimization problems \eqref{eq:geometric_inverse_problem_with_TV_of_normal} involving the total variation of the normal.
In contrast to a Euclidean ADMM, as proposed for instance in \cite{GoldsteinOsher2009}, the normal vector data belongs to the sphere $\sphere{2}$.
Therefore, the formulation of the ADMM method requires concepts from differential geometry.
An analysis of the Riemannian ADMM scheme is beyond the scope of this paper and will be presented elsewhere.

In the companion paper \cite{BergmannHerrmannHerzogSchmidtVidalNunez2019:2_preprint}, we consider a discrete version of the total variation of the normal functional \eqref{eq:TV_of_normal}, which applies to piecewise flat surfaces.
These arise naturally when surfaces are represented by meshes, or when they are discretized for computational purposes.
We also discuss the utility of the discrete TV of the normal as a shape prior to recover piecewise flat surfaces in geometric inverse problems, including PDE constraints.

\appendix
\section{The Sphere as a Riemannian Manifold}
\label{sec:sphere_as_a_Riemannian_manifold}

In this section we provide some useful formulas for the sphere
\begin{equation*}
	\sphere{2} = \{ \bn \in \R^3: \abs{\bn}_2 = 1 \}
\end{equation*}
equipped with the Riemannian metric obtained from the pull back of the Euclidean metric from the ambient space $\R^3$.
We are going to represent points $\bn \in \sphere{2}$ by vectors in $\R^3$.
Moreover, we identify the tangent space at $\bn$ with the two-dimensional subspace
\begin{equation*}
	\tangent{\bn}{\sphere{2}}
	=
	\{ \bxi \in \R^3: \bxi^\top \bn = 0 \}
	.
\end{equation*}
We utilize the Riemannian metric $\Riemannian{\ba}{\bb} = \ba^\top \bb$ in $\tangent{\bn}{\sphere{2}}$ and the norm $\absRiemannian{\ba} = (\ba^\top \ba)^{1/2}$.

The geodesic distance between any two $\bn, \bn' \in \sphere{2}$ is given by
\begin{equation}
	\label{eq:geodesic_distance_on_the_sphere}
	d(\bn,\bn') 
	= 
	\arccos(\bn^\top \bn')
	.
\end{equation}

The geodesic curve $\geodesic{\bn}{\bxi}{\,\cdot\,} \colon \R \to \sphere{2}$ departing from $\bn \in \sphere{2}$ in the direction of $\bxi \in \tangent{\bn}{\sphere{2}}$ is given by
\begin{equation}
	\label{eq:geodesic_on_the_sphere}
	\geodesic{\bn}{\bxi}{t} 
	=
	\cos \bigh(){t \, \absRiemannian{\bxi}}\bn + \sin \bigh(){t \, \absRiemannian{\bxi}}\frac{\bxi}{\absRiemannian{\bxi}}.
\end{equation}
The exponential map is thus given by
\begin{equation}
	\label{eq:exponential_map_on_the_sphere}
	\myexp{\bn}{\bxi}
	=
	\geodesic{\bn}{\bxi}{1} 
	=
	\cos \bigh(){\absRiemannian{\bxi}} \, \bn + \sin \bigh(){\absRiemannian{\bxi}}\frac{\bxi}{\absRiemannian{\bxi}}
	.
\end{equation}
The logarithmic map is the inverse of the exponential map w.r.t.\ to the tangent direction $\bxi$.
In other words, $\bxi = \mylog{\bn}{\bn'}$ holds if and only if $\bxi$ is the unique element in $\tangent{\bn}{\sphere{2}}$ such that $\myexp{\bn}{\bxi} = \bn'$.
The logarithmic map is well-defined whenever $\bn \neq -\bn'$ holds.
In this case, we have
\begin{equation}
	\label{eq:logarithmic_map_on_the_sphere}
	\mylog{\bn}{\bn'} 
	= 
	d(\bn,\bn') \frac{\bn'-(\bn^\top \bn') \, \bn}{\absRiemannian{\bn'-(\bn^\top \bn') \, \bn}}
	.
\end{equation}
Finally we require the concept of parallel transport of a tangent vector from one tangent space to another, along the unique shortest geodesic connecting the base points.
Specifically, the parallel transport $P_{\bn\to \bn'} \colon \tangent{\bn}{\sphere{2}} \to \tangent{\bn'}{\sphere{2}}$ along the unique shortest geodesic $\geodesic{\bn}{\mylog{\bn}{\bn'}}{\,\cdot\,}$ connecting $\bn$ and $\bn' \neq -\bn$ is given by
\begin{equation}
	\label{eq:parallel_transport_on_the_sphere}
	\begin{aligned}
		P_{\bn\to \bn'}(\bxi)
		&
		=
		\bxi - \frac{\bxi^\top(\mylog{\bn}{\bn'})}{d^2(\bn,\bn')}(\mylog{\bn}{\bn'} + \mylog{\bn'}{\bn})
		\\
		&
		=
		\bxi + \bigh(){\cos(\absRiemannian{\bv}) \, \bu - \bu - \sin(\absRiemannian{\bv}) \, \bn} \, \bu^\top \bxi
		,
	\end{aligned}
\end{equation}
see for instance \cite{HosseiniUschmajew2017} and \cite[Section~2.3.1]{Persch2018}, repectively.
Here we used the abbreviations $\bv = \mylog{\bn}{\bn'}$, $\absRiemannian{\bv} = d(\bn,\bn')$ and $\bu = \frac{\bv}{\absRiemannian{\bv}}$.
To see that both expressions in~\eqref{eq:parallel_transport_on_the_sphere} coincide ---after plugging in
the definition of the geodesic distance~\eqref{eq:geodesic_distance_on_the_sphere}--- it remains to show that
\begin{equation*}
	-\frac{\bn^\top\bn'\mylog{\bn}{\bn'}}{\absRiemannian{\mylog{\bn}{\bn'}}}
	+ \sqrt{1-(\bn^\top\bn')^2} \, \bn
	= 
	\frac{\mylog{\bn'}{\bn}}{\absRiemannian{\mylog{\bn'}{\bn}}},
\end{equation*}
which holds true since the norm of the logarithmic map is
\begin{equation*}
	\absRiemannian{\mylog{\bn}{\bn'}} 
	= 
	\absRiemannian{\bn' - \bn^\top\bn'\bn} 
	= 
	\sqrt{ (\bn'^\top\bn') - (\bn^\top\bn')}
	=
	\sqrt{ 1 - (\bn^\top\bn')}
	= \absRiemannian{\mylog{\bn'}{\bn}}.
\end{equation*}
Hence multiplying with the denominator of the first term in \eqref{eq:parallel_transport_on_the_sphere} yields the equality with the second term, since using the definition of the logarithmic map we obtain
\begin{equation*}
	(\bn^\top\bn') \, \bn - (\bn^\top\bn')^2 \bn - (1-(\bn^\top\bn')^2) \, \bn 
	= 
	\bn - (\bn^\top\bn') \, \bn'.
\end{equation*}

\subsection*{Acknowledgments}

The authors would like to thank two anonymous reviewers for their constructive criticism which helped improve the paper.

This work was supported by DFG grants HE~6077/10--1 and SCHM~3248/2--1 within the \href{https://spp1962.wias-berlin.de}{Priority Program SPP~1962} (\emph{Non-smooth and Complementarity-based Distributed Parameter Systems: Simulation and Hierarchical Optimization}), which is gratefully acknowledged.

\ifcsname newrefcontext\endcsname
	\newrefcontext[sorting=nyt]
\printbibliography
\else
\fi

\end{document}